\newcommand{\de}{\partial}
\newcommand{\R}{\mathbb R}
\newcommand{\al}{\alpha}
\newcommand{\C}{\mathbb C}
\newcommand{\B}{\mathbb B}
\newcommand{\D}{\mathbb D}
\newcommand{\N}{\mathbb N}
\def\v{\varphi}
\def\Re{{\sf Re}\,}
\newtheorem{theorem}{Theorem}[section]
\newtheorem{lemma}[theorem]{Lemma}
\newtheorem{proposition}[theorem]{Proposition}
\newtheorem{corollary}[theorem]{Corollary}
\newtheorem{question}[theorem]{Question}
\theoremstyle{definition}
\newtheorem{definition}[theorem]{Definition}
\newtheorem{example}[theorem]{Example}
\theoremstyle{remark}
\newtheorem{remark}[theorem]{Remark}
\numberwithin{equation}{section}
\numberwithin{equation}{section}
\begin{document}
\title[Variation, extreme and support points]{Variation of Loewner chains,
extreme and support points in the class $S^0$  in higher dimensions}
\author[F. Bracci]{Filippo Bracci$^\dag$}\thanks{$^\dag$ Supported by
the ERC grant ``HEVO - Holomorphic Evolution Equations'' n. 277691}
\author[I. Graham]{Ian Graham$^\ddag$}\thanks{$^\ddag$ Supported by the
Natural Sciences and Engineering Research Council of Canada under
grant A9221}
\author[H. Hamada]{Hidetaka Hamada$^{\star}$}\thanks{$^\star$ Supported by
JSPS KAKENHI Grant Number 25400151}
\author[G. Kohr]{Gabriela Kohr$^{\star\star}$}\thanks{$^{\star\star}$
Supported by a grant of the
Romanian National Authority for Scientific Research, CNCS-UEFISCDI,
project number PN-II-ID-PCE-2011-3-0899}
\address{F. Bracci: Dipartimento di Matematica\\
Universit\`{a} di Roma \textquotedblleft Tor Vergata\textquotedblright\ \\
Via Della Ricerca Scientifica 1, 00133 \\
Roma, Italy} \email{fbracci@mat.uniroma2.it}
\address{I. Graham: Department of Mathematics\\
University of Toronto\\
40 St. George Street\\
Toronto, Canada M5S 2E4} \email{graham@math.toronto.edu}
\address{H. Hamada: Faculty of Engineering, Kyushu Sangyo University, 3-1 Matsukadai 2-Chome,
Higashi-ku Fukuoka 813-8503, Japan}
\email{h.hamada@ip.kyusan-u.ac.jp}
\address{G. Kohr: Faculty of Mathematics and Computer Science,
Babe\c{s}-Bolyai University, 1 M. Kog\u{a}l\-niceanu Str., 400084
Cluj-Napoca, Romania} \email{gkohr@math.ubbcluj.ro}

\subjclass[2000]{Primary 32H02; Secondary 30C45}

\date{}

\keywords{Exponentially squeezing Loewner chain,
extreme point, ger\"aumig Loewner chain, parametric representation,
support point, time-$\log M$ reachable mapping, variation of Loewner chains.}

\begin{abstract}
We introduce a family of natural normalized Loewner chains in the unit ball,
which we call ``ger\"aumig''---spacious---which allow to construct,
by means of suitable variations, other normalized Loewner chains which coincide
with the given ones from a certain time on. We apply our construction to the study
of support points, extreme points and time-$\log M$-reachable functions in the class $S^0$ of
mappings admitting parametric representation.
\end{abstract}

\maketitle

\section{Introduction}

Let $\B^n:=\{z\in \C^n: \|z\|^2<1\}$ denote the Euclidean unit ball of $\C^n$. Let
\[
S:=\{f: \B^n \to \C^n: f(0)=0, df_0={\sf id}, f \hbox{ univalent}\}
\]
be the class of normalized univalent mappings in $\B^n$.
For $n=1$ the class $S$ is compact, and a great variety of extremal problems have been
studied (see {\sl e.g.} \cite{Du}, \cite{HM}, \cite{Po}, \cite{Ro}, \cite{Scha}).
Also, in the case of one complex variable every $f\in S$ can be embedded into a
normalized Loewner chain (see \cite{Po}).
Much is known about the structure of extreme points and support points of linear problems,
in particular they are single-slit mappings (see {\sl e.g.} \cite[pp. 286-288 and pp. 306-307]{Du}).

In higher dimensions, the class $S$ is not compact,
there are no single-slit mappings,
and it is not known whether every element in $S$
can be embedded into a normalized Loewner chain.
Partial results concerning the latter question
can be found in \cite{ABW2}, \cite{GHK1}, \cite{GKK}.

For $n>1$ the compact subclass $S^0$ of $S$ of mappings admitting parametric
representation was introduced in \cite{GHK1}; it was first considered by Poreda
(see \cite{Por1}, \cite{Por2}) on the polydisc. As in the case of one complex variable, the
class $S^0$ does not have a linear structure, so it natural to consider both linear and
nonlinear extremal problems in the class $S^0$. In this paper we will focus on linear
problems where some recent progress has been made in identifying mappings which are or
which are not support points or extreme points
(see \cite{B}, \cite{GHKK1}, \cite{GHKK2}, \cite{Ro2}, \cite{Schl}).
However, we point out that our method works for non-linear problems as well,
and we give an account of it in Proposition \ref{nonlinear}.

In his 1998 thesis, Roth \cite{Ro} developed a control-theoretic variational method which
gives a version of Pontryagin maximum principle for families of holomorphic functions on the
unit disc. Recently, he  obtained similar results in higher dimensions for mappings on the
unit ball $\B^n$ in $\C^n$ (see \cite{Ro2}).

A variational method for linear invariant families
in ${\sf Hol}(\B^n,\C^n)$ may be found in \cite{PS}.

One of the main difficulties when dealing with univalent mappings in higher dimensions
is that the lack of an uniformization theorem does not allow to construct easily
variations of a given normalized Loewner chain.

The aim of the present paper is to define a natural class of normalized Loewner chains,
which we call {\sl ger\"aumig}, which allow to construct other normalized Loewner chains
having the property that from a certain time on, they coincide with the
initial ger\"aumig Loewner chain.
This variational method seems to be completely new and seems to adapt well to the
case of bounded univalent mappings of the ball having some regular extension up to the boundary.

We refer the reader to Section \ref{ger} for the definition of ``ger\"aumig'' Loewner
chains and Theorem \ref{variation} for the result on variation of ``ger\"aumig'' Loewner chains.
For the time being, we content ourselves to give the following definition.
A normalized Loewner chain $(f_t)_{t\geq 0}$ on $\B^n$ is called
{\sl exponentially squeezing} in $[T_1,T_2)$ for some $0\leq T_1<T_2\leq \infty$
provided there exists  $a\in (0,1]$ such that for all $T_1\leq s<t<T_2$
it follows $\|f_t^{-1}(f_s(z))\|\leq e^{a(s-t)}\|z\|$, for all $z\in \B^n$.
In particular we have the following result (whose proof is in Section \ref{support}):

\begin{proposition}
\label{p.support-extreme}
Let  $(f_t)_{t\geq 0}$ be a  normal Loewner chain which is exponentially
squeezing in $[T_1,T_2)$ for some $0\leq T_1<T_2\leq +\infty$.
Then $f_0$ is not a support point of $S^0$. Also, $f_0$ is not an extreme point of $S^0$.
\end{proposition}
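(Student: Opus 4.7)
The plan is to apply the variational machinery of Theorem \ref{variation} to the exponentially squeezing chain $(f_t)_{t\geq 0}$. First, I would verify (or invoke from Section \ref{ger}) that exponential squeezing on $[T_1,T_2)$ places $(f_t)$ within the scope of that theorem, and use it to produce a holomorphic family $\{(g_t^\lambda)\}_{\lambda\in\D_\rho}$ of normalized Loewner chains, parametrized by a small disc $\D_\rho\subset\C$, with the properties $(a)$ $g_t^0 = f_t$ for every $t\geq 0$, $(b)$ $g_t^\lambda = f_t$ for every $t\geq T_2$, and $(c)$ $\lambda\mapsto g_0^\lambda$ holomorphic and non-constant. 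Property $(b)$ together with the normality of $(f_t)$ and continuity in $t$ ensures that each $(g_t^\lambda)$ is itself a normal Loewner chain, hence $g_0^\lambda \in S^0$ for every $\lambda\in\D_\rho$.

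For the non-support conclusion, I would argue by contradiction. If $L$ were a continuous $\C$-linear functional on the space of holomorphic maps $\B^n\to\C^n$, non-constant on $S^0$, with $\Re L(f_0)=\max_{g\in S^0}\Re L(g)$, then $\varphi(\lambda):=L(g_0^\lambda)$ would be holomorphic on $\D_\rho$ with $\Re\varphi$ attaining its maximum at the interior point $\lambda=0$, and hence would be constant by the maximum principle. Thus $L$ would annihilate every variational direction $\partial_\lambda^k g_0^\lambda|_{\lambda=0}$. Exploiting the freedom in choosing the perturbation data of Theorem \ref{variation}, these directions can be arranged to span a subspace on which no functional non-constant on $S^0$ can vanish, producing the required contradiction. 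For the non-extreme conclusion, using two (or more) independent parameters in Theorem \ref{variation} together with an implicit function or symmetrization argument that kills the leading quadratic error, one constructs distinct points $u,v\in S^0$ with $f_0=\tfrac{1}{2}(u+v)$, exhibiting $f_0$ as a non-trivial midpoint and hence non-extreme.

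The main obstacle lies entirely in the first step: producing perturbations of $(f_t)$ on $[T_1,T_2)$ that close up into bona fide normalized Loewner chains coinciding with $(f_t)$ past $T_2$, while moving $g_0^\lambda$ genuinely away from $f_0$. The exponentially squeezing estimate $\|f_t^{-1}(f_s(z))\|\leq e^{a(s-t)}\|z\|$ provides the geometric ``room'' for absorbing such perturbations without destroying the chain structure, and the ger\"aumig formalism of Section \ref{ger} is precisely the quantification of this room. Granting Theorem \ref{variation}, the support and extreme point deductions follow respectively from the maximum principle applied to the holomorphic family and from a symmetric combination of independent variations.
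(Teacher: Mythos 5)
Your plan is essentially right for the case $T_1=0$, but it has a genuine gap when $T_1>0$, which the statement allows. Theorem \ref{variation} only produces a variation when the chain is ger\"aumig on an interval \emph{starting at $0$}, and the variation there takes the form $g_t^\lambda=f_t+\lambda(1-t/T)h$, so the initial map moves: $g_0^\lambda=f_0+\lambda h$. But exponential squeezing on $[T_1,T_2)$ with $T_1>0$ (after passing through Theorem \ref{squeez-geraumig}) only yields a chain that is ger\"aumig on some $[T_1',T_2')\subset(T_1,T_2)$, not on $[0,T)$. Any variation built on such an interval leaves $g_0^\lambda\equiv f_0$ fixed, so your property (c) cannot hold, and you learn nothing about $f_0$. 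The paper's Lemma \ref{one} resolves this by time-shifting via Remark \ref{back} to reduce to $T_1=0$, and then invoking the nontrivial external fact (from \cite[Theorem~2.1]{GHKK1} and \cite[Theorem~1.1]{Schl}) that if $f_0$ is a support point or extreme point of $S^0$, then so is $e^{-T_1}f_{T_1}$. Without this propagation-along-the-chain result, the conclusion about $f_0$ does not follow from a conclusion about $f_{T_1}$, and nothing in your sketch supplies it.

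Two secondary points. First, you hand-wave the existence of an admissible perturbation direction $h$ with $\Re L(h)\neq 0$ (``can be arranged to span a subspace on which no functional non-constant on $S^0$ can vanish''); this is exactly the content of Lemma \ref{poly} and should be stated and used directly. Second, your non-support and non-extreme arguments are more complicated than necessary: since $g_0^\lambda=f_0+\lambda h$ is \emph{affine} in $\lambda$, there is no ``leading quadratic error'' to kill, no need for an implicit function theorem or symmetrization, and no need for the maximum principle---one simply picks $h$ with $\Re L(h)>0$ and observes that $f_0\pm\epsilon h\in S^0$, so $\Re L(f_0+\epsilon h)>\Re L(f_0)$ and $f_0=\tfrac12(f_0+\epsilon h)+\tfrac12(f_0-\epsilon h)$ is a nontrivial midpoint.
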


A similar result holds in case of Fr\'echet differentiable functionals, see Proposition \ref{nonlinear}.

In fact, an exponentially squeezing normal Loewner chain can be
suitably re-para\-mete\-ri\-zed in time in order to construct a ger\"aumig Loewner
chain (see Theorem \ref{squeez-geraumig}). It is interesting to note that
all bounded normalized functions in the unit disc can be embedded into an
exponentially squeezing chain (and in fact ger\"aumig Loewner chain)
from a certain time on---which, geometrically, amounts to evolve the image
of the mapping into a disc
in finite time and consider then the natural radial dilatation of such a disc.
While, in higher dimensions, this is no longer the case (see Example \ref{ese-no}).

Proposition \ref{p.support-extreme} allows to prove directly the following result
(precise definitions and the proof are contained in Section \ref{support}):

\begin{theorem}
Let $f\in S^0$. Assume that
there exist $g\in S^0$ and $r\in (0,1)$ such that $f(z)=\frac{1}{r}g(rz)$, for all $z\in\B^n$.
Then $f$ is not a support point of the class $S^0$.
Also, $f$ is not an extreme point of $S^0$.
\end{theorem}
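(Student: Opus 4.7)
The plan is to exhibit $f$ as the time-$0$ map of a normal Loewner chain $(f_t)_{t\geq 0}$ which is exponentially squeezing on some nondegenerate interval $[0,T_2)$, so that Proposition \ref{p.support-extreme} delivers both conclusions at once. Since $g\in S^0$, I would fix a normal Loewner chain $(g_t)_{t\geq 0}$ realising the parametric representation of $g$, so $g_0=g$, and set
\[
f_t(z):=\frac{1}{r}\,g_t(rz),\qquad t\geq 0,\ z\in\B^n.
\]
The first step is to verify that $(f_t)$ is a normal Loewner chain with $f_0=f$: the normalisation $df_t(0)=e^t\,{\sf id}$ is immediate; the transition maps read $v^f_{s,t}(z)=\frac1r v^g_{s,t}(rz)$, with $v^g_{s,t}=g_t^{-1}\circ g_s$, and map $\B^n$ into itself thanks to the Schwarz lemma applied to $v^g_{s,t}$ on $r\B^n$; and normality of $(e^{-t}f_t)$ is inherited from $(e^{-t}g_t)$ through the compact inclusion $r\oB^n\Subset\B^n$.

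The core technical issue is the exponential squeezing of $(f_t)$, which in view of the formula for $v^f_{s,t}$ boils down to a Schwarz--Pick-type estimate
\[
\|v^g_{s,t}(w)\|\leq e^{a(s-t)}\|w\|\qquad\text{for every $\|w\|\leq r$ and $s,t\in[0,T_2)$},
\]
with suitable $a\in(0,1]$ and $T_2>0$. To prove it I would restrict to the complex line through $w\neq 0$: setting $\xi:=w/\|w\|$ and $h(\zeta):=v^g_{s,t}(\zeta\xi)$, the map $h$ belongs to ${\sf Hol}(\D,\B^n)$, satisfies $h(0)=0$, $h'(0)=e^{s-t}\xi$, and $\|h(\zeta)\|\leq|\zeta|$ by the vector-valued Schwarz lemma, so the quotient $k(\zeta):=h(\zeta)/\zeta$ extends holomorphically to $\D$ with $\|k(0)\|=e^{s-t}$ and $k(\D)\subset\oB^n$. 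Contractivity of the Kobayashi distance on $\B^n$ (which at the origin takes the familiar form $\tfrac12\log\tfrac{1+\|z\|}{1-\|z\|}$) then gives the standard vector-valued Schwarz--Pick bound
\[
\|k(\zeta)\|\leq \frac{e^{s-t}+|\zeta|}{1+e^{s-t}|\zeta|},
\]
which evaluated at $\zeta=\|w\|\leq r$ yields $\|v^g_{s,t}(w)\|\leq\|w\|\cdot\frac{e^{s-t}+r}{1+re^{s-t}}$ (using monotonicity in $\|w\|$ of the right-hand side).

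Setting $u=t-s$ and $\psi(u):=(e^{-u}+r)/(1+re^{-u})$, a direct check gives $\psi(0)=1$ and $\psi'(0)=-(1-r)/(1+r)<0$, so for any fixed $a<(1-r)/(1+r)$ the inequality $\psi(u)\leq e^{-au}$ holds on some interval $(0,T_2)$ with $T_2=T_2(a,r)>0$. This is exactly the required exponential squeezing on $[0,T_2)$, and Proposition \ref{p.support-extreme} then produces both conclusions. The main obstacle I anticipate is the vector-valued Schwarz--Pick step; it becomes elementary once one reduces to the scalar case by passing to the holomorphic quotient $k=h/\zeta$ on a one-dimensional complex slice.
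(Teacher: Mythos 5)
Your proof is correct, and it reaches the same intermediate goal as the paper (exhibit $f$ as the initial element of a normal Loewner chain that is exponentially squeezing on some nondegenerate interval, then invoke Proposition \ref{p.support-extreme}), but the route to squeezing is genuinely different. The paper's Proposition \ref{middle} works on the infinitesimal side, verifying condition \eqref{squeezing}: it restricts the Herglotz vector field $G(\cdot,t)$ of $(g_t)$ to complex lines through the origin, obtains functions $\zeta p_k(\zeta)$ with $p_k$ in the Carath\'eodory class, and derives the uniform lower bound $\Re\langle -G(z,t),z/\|z\|^2\rangle\geq c$ on $\|z\|\leq r$ by a compactness (contradiction) argument; this yields squeezing of $(f_t)$ on all of $[0,\infty)$. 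You instead work on the integrated side, verifying \eqref{equiv} directly: a Schwarz--Pick estimate on the transition maps $v^g_{s,t}$ restricted to $r\overline{\B}^n$ (via the quotient trick on slices and Kobayashi contractivity) gives $\|v^g_{s,t}(w)\|\leq\|w\|\frac{e^{s-t}+r}{1+re^{s-t}}$ for $\|w\|\leq r$, and a first-order Taylor comparison then produces squeezing on a finite interval $[0,T_2)$ with any ratio $a<(1-r)/(1+r)$. Your version is more explicit and constructive (it avoids compactness and supplies a concrete constant, which in fact coincides with the sharp Carath\'eodory lower bound $\Re p(\zeta)\geq\frac{1-|\zeta|}{1+|\zeta|}$ that could have been used to make the paper's step non-compactness-based as well); the paper's version yields the stronger global statement that the chain is squeezing for all time, which is more than Proposition \ref{p.support-extreme} requires but is of independent interest. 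One small point of hygiene: in the quotient step, $k(\D)\subset\overline{\B}^n$ a priori, but the strict inclusion $k(\D)\subset\B^n$ follows from the maximum principle since $\|k(0)\|=e^{s-t}<1$; this is needed before invoking the Kobayashi distance on $\B^n$.
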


Finally, in Section \ref{Sreach} we apply our results to study time-$\log M$-reachable
mappings and their geometric counterparts, the mappings that can be evolved in finite time to a ball.
As a result, and in neat contrast to the one-dimensional case,
we find an example of a family of mappings $\Phi^N\in S^0$, $N>2$, which are bounded by a constant $M>1$,
are not  support points, nor  extreme points of $S^0$ but cannot be reached in time
$\log R$ for all $2<R<N$. Those mappings $\Phi^N$ are however reachable in time
$\log N$ and are in fact support points of the set of time $\log N$-reachable mappings
(see Theorem \ref{PhiN}).

\section{Ger\"{a}umig Loewner chains}\label{ger}

\subsection{Subordination chains, Loewner chains and parametric representation}
In what follows we denote by $\R^+$ the semigroup of nonnegative real numbers,
and by $\N$ the semigroup of nonnegative integer numbers.

Let
\[\mathcal M:=\{h\in {\sf Hol}(\B^n, \C^n): h(0)=0, dh_0={\sf id},
\Re \langle h(z), z\rangle >0,  \forall z\in \B^n\setminus\{0\}\},\]
where $\langle \cdot, \cdot \rangle$ denotes the Euclidean inner product in $\C^n$.
Applications of this family in the study of biholomorphic mappings
on $\B^n$ and the Loewner theory in higher dimensions
may be found in \cite{Ar}, \cite{ABW2}, \cite{B2}, \cite{DGHK}, \cite{ERS},
\cite{GHK1}, \cite[Chapter 8]{GK03},
\cite{Pf}, \cite{Su77}, \cite{Vo}. A new geometric approach of Loewner theory
on the unit disc and complete hyperbolic manifolds may be found in \cite{B2} and \cite{B1}.

\begin{remark}\label{simply}
If $h\in {\sf Hol}(\B^n, \C^n)$ is such that $h(0)=0$, $dh_0={\sf id}$,
$\Re \langle h(z), z\rangle \geq 0$ for all $z\in \B^n$, then, in fact, $h\in \mathcal M$, by
the minimum principle for harmonic functions.
\end{remark}

\begin{definition}\label{Herglotz}
 A \textit{Herglotz vector field associated with the class
 $\mathcal M$} on $\B^n$ is a mapping $G:\B^n\times \R^+\to
\C^n$ with the following properties:
\begin{itemize}
\item[(i)] The mapping $G(z,\cdot)$ is measurable on $\R^+$ for all
$z\in \B^n$.
\item[(ii)] $-G(\cdot, t)\in \mathcal M$  for a.e. $t\in [0,+\infty)$.
\end{itemize}
\end{definition}

\begin{definition}\label{regular}
For a given Herglotz vector field $G(z,t)$ associated with the class $\mathcal M$ on $\B^n$,
a {\sl normalized solution} to the  {\sl Loewner-Kufarev PDE}
associated with $G(z,t)$ consists of a family $(f_t)_{t\geq 0}$ of
holomorphic mappings from $\B^n$ to $\C^n$ such that
$f_t(0)=0$ and $d(f_t)_0=e^t{\sf id}$ for all $t\geq 0$, the mapping
$t\mapsto f_t$ is continuous with respect to the topology in ${\sf Hol}(\B^n,\C^n)$
induced by the uniform convergence on compacta in $\B^n$, and
the following equation is satisfied for a.e. $t\geq 0$ and for all  $z\in \B^n$
\begin{equation}\label{LoewnerPDE}
\frac{\de f_t}{\de t}(z)=-d(f_t)_z \cdot G(z,t).
\end{equation}
\end{definition}

\begin{definition}
A {\sl normalized subordination chain} $(f_t)_{t\geq 0}$ is a family of holomorphic mappings
$f_t:\mathbb B^n\rightarrow\mathbb C^n$, such that $f_t(0)=0$, $d(f_t)_0=e^t {\sf id}$ for all $t\geq 0$,
and for every $0\leq s\leq t$ there exists $\v_{s,t}:\B^n \to \B^n$ holomorphic such that
$\v_{s,t}(0)=0$ and $f_s=f_t\circ \v_{s,t}$.
A normalized subordination chain $(f_t)_{t\geq 0}$ is called a {\sl normalized Loewner chain}
if for all $t\geq 0$ the mapping $f_t$ is univalent.
\end{definition}

\begin{definition}
A normalized Loewner chain $(f_t)_{t\geq 0}$ on $\B^n$ is called a
{\sl normal Loewner chain} if the family $\{e^{-t}f_t(\cdot)\}_{t\geq 0}$ is normal.
\end{definition}

Putting together \cite[Chapter 8]{GK03}, \cite{GKP}, \cite[Proposition 2.6]{ABW}
(see also \cite{ABHK}, \cite{DGHK}),  we have the following result:

\begin{theorem}\label{all}
\begin{enumerate}
  \item If $(f_t)_{t\geq 0}$ is any normalized Loewner chain on $\B^n$,
  then it is a normalized solution to a Loewner-Kufarev PDE \eqref{LoewnerPDE}
  for some Herglotz vector field $G(z,t)$ associated with the class $\mathcal M$ in $\B^n$.
  \item Let $G(z,t)$ be a Herglotz vector field associated with
  the class $\mathcal M$ on $\B^n$. Then there exists a unique
  normal Loewner chain $(g_t)_{t\geq 0}$---called the {\sl canonical solution}---which is a
  normalized solution to \eqref{LoewnerPDE}. Moreover, $\bigcup_{t\geq 0}g_t(\B^n)=\C^n$.
  \item If $(f_t)_{t\geq 0}$ is a normalized solution to \eqref{LoewnerPDE},
  then $(f_t)_{t\geq 0}$ is a normalized subordination chain on $\B^n$.
  Moreover, there exists a holomorphic mapping $\Phi: \C^n \to \bigcup_{t\geq 0} f_t(\B^n)$,
  with $\Phi(0)=0$ and $d\Phi_0={\sf id}$ such that $f_t=\Phi \circ g_t$, where $(g_t)_{t\geq 0}$
  is the canonical solution to \eqref{LoewnerPDE}.
  In particular, $(f_t)_{t\geq 0}$ is a normalized Loewner chain if and only if $\Phi$ is univalent.
\end{enumerate}
\end{theorem}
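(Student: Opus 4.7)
The plan is to handle the three parts in order, since each builds on its predecessor via the transition family $\v_{s,t}$ of the chain. For part~(1), I would start from a normalized Loewner chain $(f_t)_{t\ge 0}$ and its transition maps $\v_{s,t}\in {\sf Hol}(\B^n,\B^n)$ satisfying $f_s = f_t\circ \v_{s,t}$, first proving that $t\mapsto f_t$ and $(s,t)\mapsto \v_{s,t}$ are locally Lipschitz in the compact-open topology on $\B^n$; here the normalizations $d(f_t)_0 = e^t\,{\sf id}$, $\v_{s,t}(0)=0$, $d(\v_{s,t})_0 = e^{s-t}\,{\sf id}$, combined with a Schwarz-type estimate applied to $e^{t-s}\v_{s,t}$, do the work. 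Then, for almost every $s$, I would set
\[
G(z,s) := -\lim_{t\to s^+}\frac{\v_{s,t}(z)-z}{t-s},
\]
verify measurability in $s$ and that $-G(\cdot,s)\in \mathcal M$ (using Remark~\ref{simply} to promote non-strict to strict positivity), and differentiate $f_s = f_t\circ \v_{s,t}$ in $s$ almost everywhere to obtain \eqref{LoewnerPDE}.

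For part~(2), starting from $G$, I would construct the transition family by solving, for each $s\ge 0$, the ODE $\partial_t \v_{s,t}(z) = G(\v_{s,t}(z),t)$ with $\v_{s,s}={\sf id}$. Carath\'eodory existence/uniqueness in the holomorphic setting yields a unique family satisfying the semigroup identity, the squeeze $\|\v_{s,t}(z)\|\le e^{s-t}\|z\|$ (coming from $-G(\cdot,t)\in\mathcal M$) and $d(\v_{s,t})_0 = e^{s-t}\,{\sf id}$. I would then define the canonical solution by $g_t := \lim_{s\to\infty} e^s\,\v_{t,s}$; locally uniform convergence is to be proved via normality of $\{e^{-(s-t)}\v_{t,s}\}_s$ (which are self-maps of $\B^n$ fixing $0$) plus a monotonicity argument rooted in the semigroup identity. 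The properties that $g_t$ is univalent, normal, satisfies \eqref{LoewnerPDE}, and that $\bigcup_{t\ge 0} g_t(\B^n) = \C^n$ then follow by inspection from the limit together with the upper/lower bounds on $d(\v_{s,t})_0$; uniqueness of $(g_t)$ inside the normal class comes from standard growth constraints at infinity.

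For part~(3), given any normalized solution $(f_t)$ to \eqref{LoewnerPDE}, ODE uniqueness would identify the transition family of $(f_t)$ with the $\v_{s,t}$ constructed in part~(2), so $f_s = f_t\circ \v_{s,t}$ and $(f_t)$ is automatically a normalized subordination chain. I would then set $\Phi_t := f_t\circ g_t^{-1}$ on $g_t(\B^n)$; compatibility $\Phi_s = \Phi_t|_{g_s(\B^n)}$ follows from $g_s = g_t\circ \v_{s,t}$ and $f_s = f_t\circ \v_{s,t}$. Since $\bigcup_t g_t(\B^n) = \C^n$ by part~(2), these glue into a single entire map $\Phi:\C^n\to \bigcup_t f_t(\B^n)$ with $\Phi(0)=0$ and $d\Phi_0 = {\sf id}$. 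The final equivalence, that $(f_t)$ is a Loewner chain if and only if $\Phi$ is univalent, is then immediate from $f_t = \Phi\circ g_t$ and the univalence of each $g_t$.

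The main obstacle lies in part~(2): proving locally uniform convergence of $e^s\,\v_{t,s}$ and the surjectivity $\bigcup_t g_t(\B^n) = \C^n$ requires a compactness/monotonicity argument that is notably more delicate in $\B^n$ than in $\D$, owing to the absence of a uniformization theorem and to the more rigid nature of holomorphic self-maps of the ball (one must genuinely exploit the structure of $\mathcal M$, rather than relying on Riemann-mapping arguments as in the disc). Parts~(1) and (3) essentially reduce, once that machinery is in hand, to bookkeeping: regularity plus the chain rule for part~(1), and ODE uniqueness plus a gluing argument for part~(3).
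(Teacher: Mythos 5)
The paper does not actually prove Theorem \ref{all}: it is presented as a compilation of results from \cite[Chapter 8]{GK03}, \cite{GKP}, and \cite[Proposition 2.6]{ABW}, with no argument given. Your sketch reconstructs, correctly in outline, the standard proof underlying those citations, and your decomposition into (transition maps and difference quotients for (1)), (the limit $g_t=\lim_{s\to\infty}e^s\v_{t,s}$ for (2)), and (the gluing $\Phi_t=f_t\circ g_t^{-1}$ on the exhaustion $g_t(\B^n)\nearrow\C^n$ for (3)) matches the literature. Two points need fixing. In (2), the family whose normality matters is $\{e^{s-t}\v_{t,s}\}_{s\geq t}$, not $\{e^{-(s-t)}\v_{t,s}\}_{s}$ as you wrote: the latter tends identically to $0$. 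The correct family no longer consists of self-maps of $\B^n$, so its normality is not automatic; it follows from the growth estimate $\|e^{s-t}\v_{t,s}(z)\|\leq \|z\|/(1-\|z\|)^2$, obtained by integrating the differential inequality on $\|\v_{t,\tau}(z)\|$ that comes from $-G(\cdot,\tau)\in\mathcal M$, and the companion lower bound $\|e^{s-t}\v_{t,s}(z)\|\geq\|z\|/(1+\|z\|)^2$ is what gives $\bigcup_t g_t(\B^n)=\C^n$ and uniqueness of the normal chain --- you rightly flagged these as the crux, but the mechanism is this one-variable growth estimate rather than any deep compactness. In (3), a normalized solution to \eqref{LoewnerPDE} is not assumed a priori to possess a transition family, so ``ODE uniqueness identifies the transition family of $(f_t)$'' is logically backwards; the correct order, as in \cite[Proposition 2.6]{ABW}, is to take the flow $\v_{s,t}$ of the ODE, verify by differentiating $t\mapsto f_t\circ\v_{s,t}$ that it is constant, hence $f_s=f_t\circ\v_{s,t}$, and only then conclude that $(f_t)$ is a subordination chain. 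With those two corrections your plan is a faithful summary of the proofs the paper is referring to.
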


We close this section with the notion of parametric representation on $\B^n$
(see \cite{GHK1}; cf. \cite{Por1}, \cite{Por2}, on the unit polydisc in $\C^n$).

\begin{definition}
Let $f\in S$. We say that $f$ admits {\sl parametric representation} if
$$f(z)=\lim_{t\to\infty}e^t\varphi(z,t)$$
locally uniformly on $\B^n$, where
$\varphi(z,0)=z$ and
\begin{equation}\label{LLODE}
\frac{\partial \varphi}{\partial t}(z,t)=G(\varphi(z,t),t),\quad \mbox{ a.e. }\, t\geq 0,\quad
\forall\, z\in\B^n,
\end{equation}
for some Herglotz vector field $G$ associated with the class ${\mathcal M}$ on $\B^n$.
\end{definition}

We denote by $S^0$ the subset of $S$ consisting of mappings which admit parametric representation.

\begin{remark}
(i) It was shown in \cite{GKK} (see also \cite{GHK1}; cf. \cite{Por1}, \cite{Por2}) that
$f\in S$ has parametric representation if and only if
there exists a normal Loewner chain $(f_t)_{t\geq 0}$ on $\B^n$
such that $f_0=f$.

(ii) It is known that $S^0$ is compact in the topology of uniform
convergence on compacta, and that $S^0\neq S$ for $n\geq 2$ (see \cite{GKK}; see also
\cite{GHK1} and \cite{GK03}).
\end{remark}

\subsection{Exponentially squeezing and ger\"{a}umig Loewner chains}
We start with a proposition.

\begin{proposition}\label{bounded}
Let  $(f_t)_{t\geq 0}$ be a normalized Loewner chain on $\B^n$.
Let $0\leq T_1<T_2\leq \infty$ and $a\in (0,1]$. The following conditions are equivalent:
 \begin{enumerate}
   \item  For a.e. $t\in [T_1,T_2)$ and for all $z\in \B^n\setminus\{0\}$ it holds
\begin{equation}\label{squeezing}
      \Re \left\langle [d(f_t)_z]^{-1}\frac{\de f_t}{\de t}(z), \frac{z}{\|z\|^2}\right\rangle \geq a.
\end{equation}
   \item For all $T_1\leq s<t<T_2$ it holds
\begin{equation}\label{equiv}
\|f_t^{-1}(f_s(z))\|\leq e^{a(s-t)}\|z\|,  \quad \hbox{for all $z\in \B^n$}.
\end{equation}
 \end{enumerate}
Moreover, if one of the previous conditions---and hence both---is satisfied
then $f_t$ is bounded for all $t\in [0,T_2)$ and $\overline{f_s(\B^n)}\subset f_t(\B^n)$
for all $T_1\leq s< t<T_2$.
\end{proposition}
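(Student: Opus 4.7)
The plan is to reduce both conditions to statements about the transition mappings $\v_{s,t}:=f_t^{-1}\circ f_s$ and then apply Gronwall/ODE arguments in both directions.

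First I would invoke Theorem \ref{all}(1) to associate to $(f_t)$ a Herglotz vector field $G$ satisfying the Loewner-Kufarev PDE \eqref{LoewnerPDE}. Then \eqref{squeezing} rewrites cleanly as
\[
\Re\langle G(z,t),z\rangle \leq -a\|z\|^2\quad \text{for a.e. }t\in[T_1,T_2),\ z\in\B^n\setminus\{0\},
\]
because $[d(f_t)_z]^{-1}\frac{\de f_t}{\de t}(z)=-G(z,t)$. Next, by differentiating the identity $f_s=f_t\circ \v_{s,t}$ in $t$ and using \eqref{LoewnerPDE}, one shows that the transition $\v_{s,t}$ satisfies the ODE $\frac{\de}{\de t}\v_{s,t}(z)=G(\v_{s,t}(z),t)$ with initial condition $\v_{s,s}(z)=z$. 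I would record this derivation cleanly since it is the link between the two viewpoints.

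For (1)$\Rightarrow$(2) I would compute
\[
\tfrac{d}{dt}\|\v_{s,t}(z)\|^2=2\Re\langle G(\v_{s,t}(z),t),\v_{s,t}(z)\rangle\leq -2a\|\v_{s,t}(z)\|^2
\]
for a.e.\ $t$, so Gronwall gives $\|\v_{s,t}(z)\|^2\le e^{-2a(t-s)}\|z\|^2$, which is \eqref{equiv} because $\v_{s,t}=f_t^{-1}\circ f_s$. For (2)$\Rightarrow$(1) I would fix $s$ in the set of Lebesgue/differentiability points of the ODE (full measure in $[T_1,T_2)$), write \eqref{equiv} with $t=s+h$, $h>0$, as $\|\v_{s,s+h}(z)\|^2-\|z\|^2\leq (e^{-2ah}-1)\|z\|^2$, divide by $h$ and let $h\to 0^+$, recovering $2\Re\langle G(z,s),z\rangle\leq -2a\|z\|^2$, i.e.\ \eqref{squeezing}. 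The delicate point here—probably the main obstacle—is handling the a.e.\ regularity: the ODE for $\v_{s,t}$ holds only for a.e.\ $t$, so one must argue that the one-sided derivative at $t=s$ exists and equals $G(z,s)$ precisely on a full-measure set of $s$. I would handle this by the standard absolute continuity of $t\mapsto \v_{s,t}(z)$ coming from item (3) of Theorem \ref{all}.

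Finally, for the moreover part, \eqref{equiv} immediately gives $\v_{s,t}(\B^n)\subset\{w:\|w\|\le e^{a(s-t)}\}$ whenever $T_1\le s<t<T_2$, a relatively compact subset of $\B^n$. Hence $\overline{f_s(\B^n)}=\overline{f_t(\v_{s,t}(\B^n))}\subset f_t(\overline{\B^n_{e^{a(s-t)}}})\subset f_t(\B^n)$, which proves the inclusion and, taking any $s\in (T_1,T_2)$ and $t$ slightly larger, shows $f_s$ has bounded image. To extend boundedness to every $t\in [0,T_2)$, for $t\geq T_1$ one chooses $s\in(t,T_2)$ and applies the inclusion $f_t(\B^n)\subset f_s(\overline{\B^n_{e^{a(t-s)}}})$, while for $t<T_1$ one uses $f_t=f_{T_1}\circ \v_{t,T_1}$ together with $\v_{t,T_1}(\B^n)\subset\B^n$ to conclude that $f_t(\B^n)\subset f_{T_1}(\B^n)$, which is bounded by the previous step.
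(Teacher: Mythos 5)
Your (1)$\Rightarrow$(2) step and the ``moreover'' part match the paper's argument. The problem is in (2)$\Rightarrow$(1): you correctly locate the delicate point but do not actually close it. You want to divide $\|\v_{s,s+h}(z)\|^2-\|z\|^2\le (e^{-2ah}-1)\|z\|^2$ by $h$ and send $h\to 0^+$, which requires the one-sided derivative of $t\mapsto\v_{s,t}(z)$ at $t=s$ to exist and equal $G(z,s)$ for a.e.\ $s$. Absolute continuity of $t\mapsto \v_{s,t}(z)$ only yields a.e.\ differentiability in $t$ for each \emph{fixed} $s$, with the exceptional null set depending on $s$; it says nothing about the left endpoint $t=s$ itself, and the union over all $s$ of these null sets can be the whole interval. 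To rescue your route you would need a genuine Lebesgue-point argument: write $\v_{s,s+h}(z)-z=\int_s^{s+h}G(\v_{s,\tau}(z),\tau)\,d\tau$, use a local Lipschitz bound $\|G(\v_{s,\tau}(z),\tau)-G(z,\tau)\|\le C(\tau-s)$ to replace the integrand by $G(z,\tau)$ up to $O(h)$, and then apply the Lebesgue differentiation theorem to $\tau\mapsto G(z,\tau)$, whose Lebesgue-point set no longer depends on $s$. As written, invoking ``absolute continuity from Theorem \ref{all}(3)'' does not fill the gap.

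The paper avoids this endpoint difficulty by a different mechanism. It uses the cocycle identity $\v_{s,t+\eta}(z)=\v_{t,t+\eta}(\v_{s,t}(z))$ to transfer the right-hand difference quotient $\frac{\v_{t,t+\eta}(w)-w}{\eta}$ (with $w=\v_{s,t}(z)$) to the left-hand one $\frac{\v_{s,t+\eta}(z)-\v_{s,t}(z)}{\eta}$, which converges for a.e.\ $t>s$ with $s$ fixed. Combined with \eqref{equiv}, this gives $\Re\langle G(\v_{s,t}(z),t),\v_{s,t}(z)\rangle\le -a\|\v_{s,t}(z)\|^2$ off an $s$-dependent null set $N_s$. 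To remove the $s$-dependence, the paper restricts $s$ to $\mathbb{Q}_+\cap(T_1,T_2)$, so the union $N=\bigcup_k N_{s_k}$ is still null, and for $t\notin N$ it sends rational $s_k\uparrow t$ using continuity of $s\mapsto\v_{s,t}(z)$ together with $\v_{t,t}(z)=z$ to recover $\Re\langle G(z,t),z\rangle\le -a\|z\|^2$. You should either adopt this countable-density argument or spell out the Lebesgue-differentiation step above.
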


\begin{proof}
Let $(\v_{s,t}:=f^{-1}_t\circ f_s)_{0\leq s\leq t}$ be the {\sl evolution family}
associated with $(f_t)_{t\geq 0}$
(see, {\sl e.g.}, \cite{B2}, \cite{GK03})
and let $G(z,t)=-[d(f_t)_z]^{-1}\frac{\de f_t}{\de t}(z)$
be the associated Herglotz vector field. Then $(\v_{s,t})$ is the unique solution to the Loewner ODE
\begin{equation}\label{ODE}
\frac{\de \v_{s,t}}{\de t}(z)=G(\v_{s,t}(z),t), \quad \hbox{a.e. } t\geq s, \quad \forall z\in \B^n,
\end{equation}
such that $\varphi_{s,s}(z)=z$.
Assume first that \eqref{equiv} holds. Let $T_1\leq s<t<T_2$.
Fix $\eta>0$  and let $w=\varphi_{s,t}(z)$.
We have
\begin{equation}
\label{difference}
\frac{\varphi_{s,t+\eta}(z)-\varphi_{s,t}(z)}{\eta}
=
\frac{\varphi_{t,t+\eta}(w)-w}{\eta},
\quad z\in \B^n,\, t\in (s,T_2).
\end{equation}
Since the limit on the left-hand side of
\eqref{difference}
exists for $\eta\to 0^+$ and is equal to
$\frac{\partial \varphi_{s,t}}{\partial t}(z)$ for a.e. $t\geq s$,
the limit of the right-hand side of (\ref{difference}) also exists for $\eta\to 0^+$.
Using (\ref{ODE}) and (\ref{difference}), we conclude that
\[
\lim_{\eta\to 0^+}\frac{\varphi_{t,t+\eta}(w)-w}{\eta}=G(\varphi_{s,t}(z),t),
\quad \forall z\in \B^n,
\quad \hbox{a.e. }  t\in (s,T_2).
\]
On the other hand,
since $\| \varphi_{t,t+\eta}(w)\|\leq e^{-a\eta}\| w\|$,
in view of the above relation,
\begin{equation}\label{sub}
\Re \langle G(\v_{s,t}(z),t), \v_{s,t}(z)\rangle\leq  -a \|\v_{s,t}(z)\|^2,
\quad \forall z\in \B^n,
\quad \hbox{a.e. }  t\in (s,T_2).
\end{equation}

Let $\mathbb{Q}_+$ be the set of nonnegative rational numbers and
let $\lambda$ be the usual Lebesgue measure in $\mathbb{R}$. Then
for each $s_k\in \mathbb{Q}_+\cap (T_1,T_2)$, there exists $N_k\subset
(s_k,T_2)$ such that $\lambda(N_k)=0$ and
\begin{equation}
\label{sub-k} \Re\langle G(\v_{s_k,t}(z),t), \v_{s_k,t}(z)\rangle\leq  -a \|\v_{s_k,t}(z)\|^2,
\quad \forall t\in (s_k,T_2)\setminus N_k,
\end{equation}
by (\ref{sub}). Let $N=\displaystyle\bigcup_{k\in\mathbb{N}}N_k$.
Then $\lambda(N)=0$ and if $t\in (T_1,T_2)\setminus N$ is fixed, we
deduce in view of (\ref{sub-k}) that
$$\Re\langle G(\v_{s_k,t}(z),t), \v_{s_k,t}(z)\rangle\leq  -a \|\v_{s_k,t}(z)\|^2,\, z\in \B^n,
s_k\in \mathbb{Q}_+\cap (T_1,T_2),\, s_k< t,\, k\in \mathbb{N}.$$ Further,
letting $\{s_{\nu(k)}\}_{k\in\mathbb{N}}\subset \mathbb{Q}_+\cap (T_1,T_2)$,
be an increasing sequence which converges to $t$ in the above relation and using the fact that
$s\mapsto \v_{s,t}(z)$ is continuous on $[0,t]$, we conclude that
$\Re\langle G(z,t),z\rangle\leq -a\| z\|^2$, $z\in \B^n$ for all $t\in (T_1,T_2)\setminus N$.
Thus, \eqref{squeezing} holds.

Conversely, assume that $(f_t)_{t\geq 0}$ satisfies \eqref{squeezing}.
Fix $z\in \B^n\setminus\{0\}$. Let $t_0\in (T_1,T_2)$. Let $s\in [T_1,t_0)$.
Note that for a.e. $t\geq s$ we have
\begin{equation}\label{deri}
\frac{\de \|\v_{s,t}(z)\|^2}{\de t}=
2\Re \left\langle \frac{\de \v_{s,t}}{\de t}(z), \v_{s,t}(z)\right\rangle=
2\Re \langle G(\v_{s,t}(z),t), \v_{s,t}(z)\rangle.
\end{equation}
Then for a.e. $t\in [s,t_0]$ by \eqref{squeezing} and \eqref{deri},
we have $\frac{\de \|\v_{s,t}(z)\|^2}{\de t}/\|\v_{s,t}(z)\|^2 \leq -2a $.
Integrating in $t$ between $s$ and $t_0$, we obtain  $\|\v_{s,t_0}(z)\|^2\leq e^{2a(s-t_0)}\|z\|^2$.
Therefore, $\|\v_{s,t_0}(z)\|\leq e^{a(s-t_0)}\|z\|$ for all $s\in [T_1,t_0]$.
This implies \eqref{equiv}.

Finally note that for $T_1\leq s<t< T_2$,
\[
\overline{f_s(\B^n)}=\overline{f_{t}(\v_{s,t}(\B^n))}\subset \overline{f_{t}(e^{a(s-t)}\B^n})=
f_{t}(\overline{e^{a(s-t)}\B^n}).
\]
Hence $\overline{f_s(\B^n)}\subset f_t(\B^n)$ for all $T_1\leq s< t<T_2$.
Moreover, since $f_t(\B^n)\subseteq f_{T_1}(\B^n)$ for all $t\in [0,T_1]$,
it follows also that $f_t(\B^n)$ is bounded for all $t\in [0,T_2)$.
\end{proof}

\begin{remark}
Let $(f_t)_{t\geq 0}$ be a normalized Loewner chain on $\B^n$.
Let $a>0$ and $0\leq T_1<T_2\leq+\infty$. Suppose that for all $T_1\leq s<t<T_2$ equation \eqref{equiv}
holds. Taking into account that $f_t(0)=0$ and $d(f_t)_0=e^t{\sf id}$, multiplying by $\|z\|$
both sides of \eqref{equiv} and taking the limit for $z\to 0$ we immediately obtain $a\leq 1$.
\end{remark}

\begin{definition}
Let $(f_t)_{t\geq 0}$ be a normalized Loewner chain in $\B^n$.
We say that $(f_t)_{t\geq 0}$ is  {\sl exponentially squeezing  in $[T_1,T_2)$,
for $0\leq T_1< T_2\leq +\infty$ (with squeezing ratio  $a\in (0,1]$})
if condition \eqref{squeezing}---or equivalently \eqref{equiv}---holds.
\end{definition}

\begin{example}
Given $0\leq T_1<T_2\leq +\infty$, examples of normal Loewner chains which are exponentially
squeezing in $[T_1,T_2)$ can be constructed as follows.
Let $G_1(z)=-z$ and let $G_2(z)=-(z_1p_1(z_1),\ldots, z_np_n(z_n))$
where $p_j:\D \to \C$  are holomorphic functions such that $\Re p_j>0$ for $j=1,\ldots, n$.
Let $\theta:\R^+\to [0,1]$ be any measurable function such that $\theta(t)\equiv 1$
for all $t\in [T_1,T_2)$. For instance one can take $\theta(t)=0$ for
$t\in \R^+\setminus [T_1,T_2)$,
or, if $T_1>0$, one can take $\theta$ to be a $C^\infty$ function with compact support in
$(T_1-\epsilon, T_2+\epsilon)$ for $\epsilon>0$ very small.
Then define $G(z,t):=\theta(t) G_1(z)+(1-\theta(t))G_2(z)$. It is easy to see that
$G$ is a Herglotz vector field associated with the class $\mathcal M$. Then by construction,
the canonical solution $(g_t)$ to \eqref{LoewnerPDE} is a normal Loewner chain which is exponentially
squeezing in $[T_1,T_2)$. Notice also that $(g_t)$ might not be exponentially
squeezing in $\R^+\setminus[T_1,T_2)$, as is the case if $p_1$ is the Cayley
transform and $\theta(t)=0$ for $t\in \R^+\setminus [T_1,T_2)$.
\end{example}

In order to properly introduce ger\"aumig Loewner chains, we need some preliminaries of linear algebra.
\begin{definition}
Let $A$ be an $n\times n$ matrix. We let
\[
\mu(A):=\min_{\|v\|=1}\|A(v)\|.
\]
\end{definition}

\begin{lemma}\label{linear}
Let $A$ be an invertible $n\times n$ matrix. Then
\[
\mu(A)=\frac{1}{\|A^{-1}\|}.
\]
\end{lemma}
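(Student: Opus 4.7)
The plan is to recognize this as the standard reciprocal relation between the smallest singular value of $A$ and the operator norm of $A^{-1}$, and to prove it by a change of variables in the defining supremum. The only thing that needs to be checked carefully is that invertibility of $A$ ensures $\mu(A)>0$, so that dividing by $\mu(A)$ is legitimate.

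First I would recall that by definition of the operator norm,
\[
\|A^{-1}\|=\sup_{w\in\C^n\setminus\{0\}}\frac{\|A^{-1}w\|}{\|w\|},
\]
and that since $A$ is invertible the map $w\mapsto v:=A^{-1}w$ is a bijection of $\C^n\setminus\{0\}$ onto itself, with inverse $v\mapsto Av$. Making this substitution gives
\[
\|A^{-1}\|=\sup_{v\in\C^n\setminus\{0\}}\frac{\|v\|}{\|Av\|}.
\]
By homogeneity the ratio $\|v\|/\|Av\|$ depends only on the direction of $v$, so the supremum over $v\neq 0$ equals the supremum over the unit sphere, and likewise for the quantity defining $\mu(A)$; by compactness of the unit sphere and continuity of $v\mapsto\|Av\|$ the infimum defining $\mu(A)$ is attained.

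Since $A$ is invertible, $\|Av\|>0$ for every $v\neq 0$, so $\mu(A)=\min_{\|v\|=1}\|Av\|>0$. Rewriting the identity above as
\[
\|A^{-1}\|=\sup_{\|v\|=1}\frac{1}{\|Av\|}=\frac{1}{\min_{\|v\|=1}\|Av\|}=\frac{1}{\mu(A)},
\]
and taking reciprocals yields $\mu(A)=1/\|A^{-1}\|$, which is the claim. I do not foresee any genuine obstacle; the only point requiring a sentence of justification is the positivity of $\mu(A)$, which is what makes the final inversion step valid.
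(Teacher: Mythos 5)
Your proof is correct and complete; it is the standard change-of-variables argument relating the smallest stretching factor of $A$ to the operator norm of $A^{-1}$, and you rightly flag that invertibility is what guarantees $\mu(A)>0$ so the final reciprocal is legitimate. The paper itself omits the proof (stating only that it is elementary), so there is nothing to compare against, but your argument is exactly the one the authors had in mind.
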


The proof of the above result is elementary and we omit it.

\begin{definition}\label{spacious}
Let $(f_t)_{t\geq 0}$ be a normalized Loewner chain on $\B^n$.  We say that $(f_t)_{t\geq 0}$ is
{\sl ger\"{a}umig\footnote{``ger\"{a}umig'' is a German word which means ``spacious''}  in $[T_1,T_2)$},
for some $0\leq T_1<T_2\leq +\infty$, if there exist  $a,b>0$ such that
\begin{enumerate}
  \item for all $t\in [T_1,T_2)$ and for all $z\in \B^n$ it holds $\mu(d(f_t)_z)\geq a$,
  \item for a.e. $t\in [T_1,T_2)$ and for all $z\in \B^n$ it holds
 $\left\|\frac{\de f_t}{\de t}(z)\right\|\leq b$,
  \item $(f_t)_{t\geq 0}$ is exponentially squeezing in $[T_1,T_2)$.
\end{enumerate}
We say that $(f_t)_{t\geq 0}$ is {\sl ger\"{a}umig} if it is ger\"{a}umig in $[0,+\infty)$.
\end{definition}

\begin{remark}
Let $a, b$ be as in Definition \ref{spacious} and assume that the squeezing ratio
of $(f_t)_{t\geq 0}$ is $c\in (0,1]$. Then
\[
c\leq \Re\left\langle [d(f_t)_z]^{-1}\frac{\de f_t}{\de t}(z),\frac{z}{\|z\|^2}\right\rangle\leq
\|[d(f_t)_z]^{-1}\|\left\|\frac{\de f_t}{\de t}(z)\right\|\frac{1}{\|z\|}\leq \frac{b}{a}.
\]
\end{remark}

\begin{remark}\label{equiv-cond}
Let $(f_t)_{t\geq 0}$ be a normalized Loewner chain which satisfies the Loewner-Kufarev PDE
\eqref{LoewnerPDE}. If $(f_t)_{t\geq 0}$ satisfies conditions (1) and (3) of Definition \ref{spacious}
and moreover there exists $c>0$ such that
\begin{itemize}
  \item[(2')] for a.e. $t\in [T_1,T_2)$ and for all $z\in \B^n$ it holds $\|d(f_t)_z\|\leq c$
  and $\|G(z,t)\|\leq c$,
\end{itemize}
then by the Loewner-Kufarev PDE it is easy to see that $(f_t)_{t\geq 0}$ satisfies also (2) of
Definition \ref{spacious} and it is therefore ger\"aumig in $[T_1,T_2)$.
\end{remark}

\begin{remark}\label{back}
Suppose $(f_t)_{t\geq 0}$ is a normalized Loewner chain, {\it respectively}
a normal Loewner chain, on $\B^n$, which is  ger\"{a}umig in $[T_1,T_2)$ for some $0<T_1<T_2\leq +\infty$.
Let $\tilde{f}_t(z):=e^{-T_1}f_{T_1+t}(z)$ for $z\in \B^n$.
Then it is easy to check that $(\tilde{f}_t)_{t\geq 0}$ is a normalized Loewner chain, {\it respectively}
a normal Loewner chain, on $\B^n$, which is  ger\"{a}umig in $[0,T_2-T_1)$ (where, if $T_2=+\infty$,
we set $T_2-T_1=+\infty$) and $\tilde{f}_0=e^{-T_1}f_{T_1}$.
\end{remark}

\begin{example}
Let $0<T_1<T_2< +\infty$ and let $0<\epsilon<T_2-T_1$. We construct a normal Loewner
chain on $\B^2$ which is ger\"{a}umig in $[T_1,T_2-\epsilon)$
but it is not ger\"{a}umig in $\R^+\setminus [T_1,T_2)$.
Let $\theta:\R^+\to [0,1]$ be such that $\theta(t)=1$ for $t\in [T_1,T_2]$
and $\theta(t)=0$ in $\R^+\setminus [T_1,T_2]$.
Define $G(z,t)=(-\theta(t)z_1-(1-\theta(t))(z_1-z_1^2), -z_2)$.
Then $G(z,t)$ is a Herglotz vector field associated with the class $\mathcal M$.
From the Loewner ODE $\frac{\de \v_{s,t}}{\de t}(z)=G(\v_{s,t}(z),t)$,
we find that for $s\in [T_1,T_2]$ it holds $\v_{s,T_2}(z)=e^{s-T_2}z$, while, for $t>T_2$
\[
\v_{T_2, t}(z)=e^{T_2-t}\left(
\frac{z_1}{1+(e^{T_2-t}-1)z_1}, z_2
\right).
\]
Therefore for $s\in [T_1,T_2]$ and $t>T_2$
\[
\v_{s,t}(z)=\v_{T_2,t}\circ  \v_{s,T_2}(z)=e^{s-t}
\left(
\frac{z_1}{1+(e^{s-t}-e^{s-T_2})z_1}, z_2
\right).
\]
By \cite[Theorem 8.1.5]{GK03}, the canonical solution to the Loewner PDE
associated with $G(z,t)$ is given by $(f_s)_{s\geq 0}$
with $f_s =\lim_{t\to\infty} e^t \v_{s,t}$. Hence, for $s\in [T_1,T_2]$,
\[
f_s(z)=\lim_{t\to \infty} e^t \v_{s,t}(z)=e^s
\left(
\frac{z_1}{1-e^{s-T_2}z_1}, z_2
\right).
\]
From this it follows easily that there exist $a,c>0$ such that $a\leq \mu(d(f_s)_z)$
and $\|(df_s)_z\|\leq c$ for all $z\in \B^2$ and for all $s\in [T_1,T_2-\epsilon]$.
Since $\|G(z,t)\|\leq 2$ for all $t\in \R^+$ and $z\in \B^2$,  by the Loewner PDE and
Remark \ref{equiv-cond}, it follows that $(f_s)_{s\geq 0}$ is ger\"{a}umig in $[T_1,T_2-\epsilon)$.
However, since $\lim_{z\to (1,0)}\Re \langle G(z,t), z\rangle=0$ for all $t\in \R^+\setminus [T_1,T_2]$,
the normal Loewner chain $(f_s)_{s\geq 0}$ is not exponentially squeezing in $\R^+\setminus [T_1,T_2)$,
hence it is not ger\"{a}umig in $\R^+\setminus [T_1,T_2)$.
\end{example}

\begin{theorem}
\label{squeez-geraumig}
Assume that $(f_t)_{t\geq 0}$ is a normalized Loewner chain, {\it respectively} a normal Loewner chain,
on $\B^n$. If $(f_t)_{t\geq 0}$ is exponentially squeezing  in $[T_1,T_2)$ for some $0\leq T_1<T_2< \infty$,  then  there exists a normalized Loewner chain, {\it respectively} a normal Loewner chain, $(g_t)_{t\geq 0}$
on $\B^n$ with $g_t=f_t$ for $t\in [0,+\infty)\setminus(T_1, T_2)$ (in particular, $g_0=f_0$)
and such that it is ger\"{a}umig  in
$[T'_1,T'_2)$ for every $T_1<T'_1<T_2'<T_2$.
\end{theorem}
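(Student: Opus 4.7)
The plan is to construct $(g_t)$ via a time reparametrization that is nontrivial only on $(T_1,T_2)$. Let $a\in(0,1]$ denote the squeezing ratio of $(f_t)$ and fix a smooth nondecreasing $\sigma:\R^+\to\R^+$ with $\sigma(t)=t$ on $[0,T_1]\cup[T_2,\infty)$, $\sigma(t)>t$ on $(T_1,T_2)$, and $0<\sigma'(t)\leq\sigma_{\max}$ a.e.\ for some constant $\sigma_{\max}<\frac{1}{1-a}$ (no upper bound being needed when $a=1$). Set $\mu(t):=e^{t-\sigma(t)}\in(0,1]$ and define
\[
g_t(z):=\begin{cases} f_t(z), & t\in \R^+\setminus[T_1,T_2],\\ f_{\sigma(t)}(\mu(t)z), & t\in[T_1,T_2].\end{cases}
\]
Continuity across $T_1$ and $T_2$ follows from $\sigma(T_i)=T_i$ and $\mu(T_i)=1$, each $g_t$ is univalent as a composition of a linear dilation with $f_{\sigma(t)}$, and $d(g_t)_0=\mu(t)e^{\sigma(t)}{\sf id}=e^t{\sf id}$.

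To show that $(g_t)$ is a Loewner chain, I would compute $\de g_t/\de t$ by the chain rule from the Loewner--Kufarev PDE for $(f_t)$ with Herglotz field $G$ and obtain
\[
\frac{\de g_t}{\de t}(z)=-d(g_t)_z\cdot\tilde G(z,t),\qquad \tilde G(z,t)=\frac{\sigma'(t)}{\mu(t)}G(\mu(t)z,\sigma(t))-(1-\sigma'(t))z.
\]
Standard bookkeeping verifies $\tilde G(0,t)=0$ and $d(-\tilde G(\cdot,t))_0={\sf id}$, while the exponential squeezing $\Re\langle-G(w,\tau),w\rangle\geq a\|w\|^2$, applied with $w=\mu(t)z$ and $\tau=\sigma(t)$, yields
\[
\Re\langle-\tilde G(z,t),z\rangle\geq a\sigma'(t)\|z\|^2+(1-\sigma'(t))\|z\|^2=(1-(1-a)\sigma'(t))\|z\|^2\geq 0
\]
by the upper bound $\sigma'(t)\leq 1/(1-a)$. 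Remark \ref{simply} then gives $-\tilde G(\cdot,t)\in\mathcal M$, and Theorem \ref{all}(3) combined with univalence of each $g_t$ produces a normalized Loewner chain agreeing with $(f_t)$ off $(T_1,T_2)$. If $(f_t)$ is normal, $\{e^{-t}g_t\}$ is locally bounded on $\B^n$ (it equals $\{e^{-t}f_t\}$ outside $[T_1,T_2]$ and is bounded on $[T_1,T_2]$ because $\{f_\tau\}_{\tau\in[T_1,T_2]}$ is compact in ${\sf Hol}(\B^n,\C^n)$), so $(g_t)$ is normal as well.

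Finally, on any $[T_1',T_2']\subset(T_1,T_2)$ one has $\mu_{\min}\leq\mu(t)\leq\mu_{\max}<1$, so $\mu(t)z$ stays in a compact subset of $\B^n$ and $\sigma(t)$ in a compact subinterval of $[T_1,T_2)$. Joint continuity and compactness then give uniform bounds on $\|d(f_{\sigma(t)})_{\mu(t)z}^{\pm 1}\|$, whence $d(g_t)_z=\mu(t)d(f_{\sigma(t)})_{\mu(t)z}$ delivers condition (1) of Definition \ref{spacious}. The standard growth estimate for mappings in $\mathcal M$ bounds $\|G(\mu(t)z,\sigma(t))\|$ on the same set, and, with $\sigma'$ bounded, the formula for $\tilde G$ bounds $\|\de g_t/\de t\|$, giving condition (2); the displayed inequality with $\sigma'\leq\sigma_{\max}<1/(1-a)$ supplies the uniform positive squeezing ratio $1-(1-a)\sigma_{\max}$, which is condition (3). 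The crux of the proof is the Herglotz verification of $\tilde G$: the correction term $-(1-\sigma'(t))z$ has the wrong sign precisely when $\sigma'(t)>1$, so the exponential squeezing of $G$ furnishes exactly the reserve of positivity needed to absorb it, and this is what dictates the quantitative constraint $\sigma'(t)\leq 1/(1-a)$.
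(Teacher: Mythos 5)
Your construction $g_t(z)=f_{\sigma(t)}(\mu(t)z)$ with $\mu(t)=e^{t-\sigma(t)}$ is exactly the paper's $g_t(z)=f_{t-\alpha(t)}(e^{\alpha(t)}z)$ under the substitution $\alpha=t-\sigma$; the Herglotz computation, the use of Remark~\ref{simply} and Theorem~\ref{all}(3), and the compactness argument for the ger\"aumig bounds all match the paper's proof (the paper simply makes $\sigma$ piecewise linear with slopes $1\pm A$, $A<a$, rather than an abstract smooth reparametrization with $\sigma'<1/(1-a)$). This is essentially the same proof and it is correct.
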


\begin{proof}
Let $a\in (0,1]$ be the squeezing ratio of $(f_t)_{t\geq 0}$ in $[T_1,T_2)$.
Let $A\in (0,a)$.
Let $\alpha:\R^+\to [-(T_2-T_1)/2, 0]$ be an absolutely continuous function such that
$\alpha(t)=0$ for $t\in [0, T_1]\cup [T_2,+\infty)$,
$\al(t)=-A(t-T_1)$ for $t\in (T_1, T_1+(T_2-T_1)/2)$ and $\al(t)=A(t-T_2)$
for $t\in [T_1+(T_2-T_1)/2, T_2)$. Let
\[
g_t(z):=f_{t-\al(t)}( e^{\al(t)}z),
\quad  z\in \B^n,
\quad  t\geq 0.
\]
Then $(g_t)_{t\geq 0}$ is a family of holomorphic mappings on $\B^n$ such that
$g_t(0)=0$, $d(g_t)_0=e^t{\sf id}$, $t\mapsto g_t$ is a continuous mapping
with respect to the topology  in ${\sf Hol}(\B^n,\C^n)$
induced by the uniform convergence on compacta in $\B^n$, and $g_0=f_0$.
Moreover, notice that $g_t=f_t$ for $t\in [0,+\infty)\setminus(T_1, T_2)$.
Let $T_1<T'_1<T_2'<T_2$  be fixed.
By a direct computation, we have for all $z\in \B^n$ and a.e. $t\geq 0$
\begin{equation}\label{Herglotz-g}
[d(g_t)_z]^{-1}
\frac{\partial g_t}{\partial t}(z)=
e^{-\al(t)}(1-\al'(t))\left[df_{t-\al(t)}\right]^{-1}\frac{\de f_{t-\al(t)}}{\de t}(e^{\al(t)}z)+\al'(t)z
\end{equation}
Notice that $A<1$ implies  $t-\al(t)\in [T_1,T_2)$ for $t\in [T_1,T_2)$.
Therefore, setting $a(t)=0$ for $t\in [0,+\infty)\setminus[T_1, T_2)$
and $a(t)=a$ for $t\in [T_1,T_2)$ and taking into account that $(f_t)_{t\geq 0}$
is exponentially squeezing  in $[T_1,T_2)$ (with squeezing ratio  $a\in (0,1]$),
we obtain from (\ref{Herglotz-g}) that
\[
      \Re \left\langle [d(g_t)_z]^{-1}
\frac{\partial g_t}{\partial t}(z), \frac{z}{\|z\|^2}\right\rangle
\geq
(1-\al'(t))a(t)+\al'(t)\geq 0,
\quad \forall z\in \B^n,
\quad \hbox{a.e. }  t\geq 0.
\]
By Theorem \ref{all}.(3) and the fact that $g_t$ is univalent for all $t\geq 0$,
$(g_t)_{t\geq 0}$ is a normalized Loewner chain.
Since $A<a$, it follows easily that $(g_t)_{t\geq 0}$ is exponentially squeezing in  $[T_1',T_2')$.

Since $\al(t)\leq c<0$ for all $t\in [T_1',T_2']$, it follows easily that $(g_t)_{t\geq 0}$
satisfies (1) of
Definition \ref{spacious} for all $t\in [T_1',T_2']$.
From the definition of $g_t$, there exists a constant $c_1>0$ such that $\| d(g_t)_z\|\leq c_1$
for $z\in \B^n$,
a.e. $t\in [T'_1,T'_2]$.
Since $\mathcal{M}$ is compact,
using (\ref{Herglotz-g}) and the fact that
$[d(f_t)_z]^{-1}
\frac{\partial f_t}{\partial t}(z)\in \mathcal{M}$,
we conclude that there exists a constant $c_2>0$
such that $\left\|[d(g_t)_z]^{-1}
\frac{\partial g_t}{\partial t}(z)\right\|
\leq c_2$ for $z\in \B^n$,
a.e. $t\in [T'_1,T'_2]$.
Then, by Remark \ref{equiv-cond},
$(g_t)_{t\geq 0}$ also satisfies (2) of
Definition \ref{spacious}
and it is therefore ger\"aumig
in $[T'_1,T'_2)$.

Finally, if $(f_t)_{t\geq 0}$ is a normal Loewner chain, then $(g_t)$ is a normal
Loewner chain as well, because $g_t=f_t$ for $t\geq T_2$.
\end{proof}

\begin{remark}
If $(f_t)$ is a normalized/normal Loewner chain which is exponentially squeezing
in $[T_1,\infty)$ for some $T_1\geq 0$,
then for every $m>T_1$ the previous result allows to construct a  normalized/normal
Loewner chain $(g^m_t)$ which coincides with $(f_t)$ on $\R^+\setminus (T_1,m)$
and it is ger\"{a}umig in $(T_1',T_2')$ for all $T_1<T_1'<T'_2<m$.
\end{remark}

\section{Variation of ger\"{a}umig  Loewner chains}\label{vari}

\begin{theorem}\label{variation}
Assume that $(f_t)_{t\geq 0}$ is a normalized Loewner chain, {\it respectively} a normal Loewner chain,
on $\B^n$. If $(f_t)_{t\geq 0}$ is  ger\"{a}umig  in $[0,T)$ for some $T>0$,  then there exists  $\epsilon_0>0$ such that for all $\epsilon\in (0,\epsilon_0]$, setting
\[
\al(t):=\begin{cases}
\epsilon\left(1-\frac{t}{T}\right), \quad t\in [0,T)\\
0, \quad t\in [T,+\infty)
\end{cases}
\]
the family $(f_t(z)+\al(t)h(z))_{t\geq 0}$ is a normalized Loewner chain, {\it respectively}
a normal Loewner chain, on $\B^n$ for every $h: \B^n\to\C^n$ holomorphic with $h(0)=dh_0=0$ and $\sup_{z\in \B^n}\|h(z)\|\leq 1$, $\sup_{z\in \B^n}\|dh_z\|\leq 1$.
\end{theorem}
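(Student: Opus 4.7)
The plan is to verify, for $\epsilon$ sufficiently small, that $g_t := f_t + \alpha(t)h$ satisfies all axioms of a normalized Loewner chain. The normalization $g_t(0) = 0$ and $d(g_t)_0 = e^t{\sf id}$ follows at once from $h(0) = 0$ and $dh_0 = 0$, and continuity of $t\mapsto g_t$ in ${\sf Hol}(\B^n,\C^n)$ is inherited from that of $(f_t)$ and of the continuous piecewise-linear function $\alpha$. Since $g_t = f_t$ on $[T,\infty)$, all further work is restricted to $[0,T)$.

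The heart of the argument is to show that the candidate Herglotz field
\[
-G_g(z,t) := [d(g_t)_z]^{-1}\partial_t g_t(z)
\]
belongs to $\mathcal M$ for a.e.\ $t\in [0,T)$. Using $d(g_t)_z = d(f_t)_z + \alpha(t)dh_z$, a direct algebraic manipulation yields
\[
-G_g(z,t) = \bigl(I + \alpha(t)M(z,t)\bigr)^{-1}\bigl(P(z,t) + \alpha'(t)H(z,t)\bigr),
\]
where $M(z,t) := [d(f_t)_z]^{-1}dh_z$, $H(z,t) := [d(f_t)_z]^{-1}h(z)$, and $P(z,t) := [d(f_t)_z]^{-1}\partial_t f_t(z)$. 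The ger\"aumig hypothesis provides $\|[d(f_t)_z]^{-1}\|\leq 1/a$, the boundedness $\|\partial_t f_t(z)\|\leq b$, and the squeezing estimate $\Re\langle P(z,t), z\rangle\geq a\|z\|^2$; in particular $P(\cdot,t)\in\mathcal M$.

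To avoid a $1/\|z\|$ blow-up in the correction terms near the origin, the crucial inputs are the Schwarz-type bounds $\|h(z)\|\leq \|z\|^2$ and $\|dh_z\|\leq \|z\|$ valid on $\B^n$, obtained by applying the Schwarz lemma to the holomorphic maps $\xi\mapsto h(\xi v)\in\C^n$ and $\xi\mapsto dh_{\xi v}\in\C^{n\times n}$ for unit $v\in\C^n$. These yield $\|M(z,t)\|\leq \|z\|/a$ and $\|H(z,t)\|\leq \|z\|^2/a$. Choosing $\epsilon_0 < a$ so that $(I+\alpha M)^{-1}$ exists with uniformly bounded norm on $\B^n\times [0,T)$, the Neumann expansion $(I+\alpha M)^{-1}P = P - \alpha M(I+\alpha M)^{-1}P$ together with $|\alpha|\leq\epsilon$ and $|\alpha'|\leq\epsilon/T$ produces an estimate of the form
\[
\Re\langle -G_g(z,t), z\rangle \geq \bigl(a - K\epsilon\bigr)\|z\|^2
\]
for a constant $K$ depending only on $a$, $b$, $T$. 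Shrinking $\epsilon_0\leq a/(2K)$ then gives nonnegativity for all $\epsilon\in(0,\epsilon_0]$ and $z\in\B^n$, and Remark~\ref{simply} promotes this to $-G_g(\cdot,t)\in\mathcal M$.

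Finally, Theorem~\ref{all}(3) expresses $g_t = \Phi\circ\tilde g_t$, where $(\tilde g_t)$ is the canonical (normal Loewner chain) solution of the Loewner-Kufarev PDE with Herglotz field $G_g$, and $\Phi:\C^n\to\bigcup_t g_t(\B^n)$ is holomorphic. Since $g_t = f_t$ is univalent for every $t\geq T$ and $\bigcup_{t\geq T}\tilde g_t(\B^n) = \C^n$, the map $\Phi$ is univalent on all of $\C^n$; hence each $g_t$ is univalent and $(g_t)$ is a normalized Loewner chain. In the normal case, $\{e^{-t}f_t\}$ is a normal family, and since $|\alpha(t)|\leq\epsilon$ and $h$ is bounded, $\{e^{-t}g_t\}$ is normal as well, so $(g_t)$ is a normal Loewner chain. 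The main technical obstacle is the estimate of the third paragraph: without the Schwarz bound $\|dh_z\|\leq\|z\|$, the perturbation $\alpha M$ would not be of order $\|z\|$ near the origin, and positivity of $\Re\langle -G_g, z\rangle$ could fail there.
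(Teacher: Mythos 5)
Your proof is correct and takes essentially the same route as the paper: verify that the candidate vector field $-G_g=[d(g_t)_z]^{-1}\partial_t g_t$ lies in $\mathcal M$ via a Neumann-series estimate, then invoke Theorem~\ref{all}(3) together with $g_t=f_t$ for $t\geq T$ and $\bigcup_{t\geq T}\tilde g_t(\B^n)=\C^n$ to get univalence of the factor $\Phi$.

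The one genuine difference is in the Schwarz-lemma bookkeeping used to tame the pairing with $z$ near the origin. The paper takes the constant bound $\|\partial_t f_t\|\leq b$ from the ger\"aumig hypothesis and improves it to $\|\partial_t f_t(z)\|\leq b\|z\|$ via the Schwarz lemma (since $\partial_t f_t$ vanishes at $0$), and uses only the first-order bound $\|h(z)\|\leq\|z\|$; this already kills the $1/\|z\|$ in its decomposition $-G_g=[df]^{-1}\partial_t f_t+\big([dg]^{-1}-[df]^{-1}\big)\partial_t f_t+[dg]^{-1}\alpha'h$. You instead keep $\|\partial_t f_t\|\leq b$ and compensate by upgrading the bounds on $h$: $\|h(z)\|\leq\|z\|^2$ and $\|dh_z\|\leq\|z\|$. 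Both routes produce an error of order $\epsilon\|z\|^2$ against the leading $a\|z\|^2$ from the squeezing hypothesis, so both close. Your remark that $\|dh_z\|\leq\|z\|$ is ``crucial'' is accurate within your decomposition, but it is worth noting that the theorem does not actually depend on it: one can instead apply the Schwarz lemma to $\partial_t f_t$, as the paper does, and the perturbation $\alpha M$ being merely $O(1)$ (rather than $O(\|z\|)$) then suffices. One very small omission, easily supplied: you should note $G_g(0,t)=0$ and $d(G_g)_0(t)=-{\sf id}$ (immediate from the normalizations of $f_t$, $h$, $\alpha$), which is part of the definition of $\mathcal M$ and is needed before the positivity estimate and Remark~\ref{simply} can be invoked.
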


\begin{proof}
Let $c\in (0,1]$ be the squeezing ratio of $(f_t)_{t\geq 0}$ and let $a,b>0$ be given by Definition \ref{spacious}.
Up to replacing $a$ and $c$ with $\min\{a,c\}$, we can suppose $a=c$. Set $\epsilon_0=\min\left\{\frac{a}{2},\frac{a^3T}{2(a+bT)}\right\}$.

First of all, notice that $(f_t+\al(t)h)_{t\geq 0}$ is
a family of holomorphic mappings on $\B^n$ such that
$(f_t+\al h)(0)=0$, $d(f_t+\al h)_0=e^t{\sf id}$, and the mapping
$t\mapsto f_t+\al h$ is continuous
with respect to the topology in ${\sf Hol}(\B^n,\C^n)$
induced by the uniform convergence on compacta in $\B^n$.

Let $E\subset [0+\infty)$ be a set of full measure for which all
conditions in the hypotheses hold and such that $\frac{\de f_t}{\de t}$ exists for all $t\in E$.
First, we notice that $d(f_t)_z+\alpha(t) dh_z$ is invertible for all
$t\in E$ and all $z\in\B^n$. Indeed, it is so if  $t\geq T$. If $t\in E\cap [0,T)$
then fix $z\in\B^n$ and let $v\in \C^n$, $\|v\|=1$,
be such that $\mu(d(f_t)_z+\alpha(t) dh_z)=\|[d(f_t)_z+\alpha(t) dh_z](v)\|$.
It follows by (1) of Definition \ref{spacious}
\begin{equation*}
\begin{split}
\mu(d(f_t)_z+\alpha(t) dh_z)&=\|[d(f_t)_z+\alpha(t) dh_z](v)\|\geq \|d(f_t)_z(v)\|-\alpha(t)\|dh_z(v)\|\\&\geq \mu(d(f_t)_z)-\alpha(t)\|dh_z\|\geq a-\alpha(t)>0.
\end{split}
\end{equation*}
Hence, we can well define a vector field $G(z,t)$, holomorphic in $z\in \B^n$ and measurable in $t\in [0,+\infty)$ in the following way
\[
G(z,t):=\begin{cases} -[d(f_t)_z+\alpha(t)dh_z]^{-1}\left(\frac{\de f_t}{\de t}(z)+\alpha'(t)h(z)\right), & t\in E\\
0 & t\in [0,+\infty)\setminus E.
\end{cases}
\]
If $t\in E$, then $G(z,t)=-z+\sum_{k\geq 2} Q_k(z,t)$ where $Q_k$ is a polynomial mapping
in $z$ of order $k$. Hence $G(0,t)\equiv 0$ and $dG_0=-{\sf id}$.
We want to show that $-G(\cdot,t)\in \mathcal M$ for all
$t\in E$. For $t\geq T$ it is true because $(f_t)_{t\geq 0}$ is a
normalized Loewner chain, so we have to check the condition for $t\in E\cap [0,T)$.
To this aim, we first note that by Lemma \ref{linear},
$\|\alpha(t)(d(f_t)_z)^{-1}dh_z\|\leq \al(t)/a \leq \alpha(0)/ a\leq 1/2$
for all $t\in E\cap [0,T)$. Therefore, for  $t\in E\cap [0,T)$,
\begin{equation*}
\begin{split}
[d(f_t)_z+\alpha(t)dh_z]^{-1}&=[{\sf id}+\alpha(t)(d(f_t)_z)^{-1}dh_z]^{-1}[d(f_t)_z]^{-1}\\&=
\sum_{j=0}^\infty (-1)^j\alpha(t)^j[(d(f_t)_z)^{-1}dh_z]^j[d(f_t)_z]^{-1},
\end{split}
\end{equation*}
and
\begin{equation}\label{est1}
\begin{split}
\|[d(f_t)_z+\alpha(t)dh_z]^{-1}\|&\leq
\sum_{j=0}^\infty \alpha(t)^j\|(d(f_t)_z)^{-1}\|^{j+1}\|dh_z\|^j\\&\leq
\sum_{j=0}^\infty \frac{\alpha(t)^j}{ a^{j+1}}\leq \frac{2}{a}.
\end{split}
\end{equation}
While,
\begin{equation}\label{est2}
\begin{split}
\|[d(f_t)_z+\alpha(t)dh_z]^{-1}-[d(f_t)_z]^{-1}\|&\leq \sum_{j=1}^\infty \frac{\alpha(t)^j}{a^{j+1}}\\&=
\frac{\alpha(t)}{a^2}\frac{1}{1-\frac{\al(t)}{a}}\leq 2\frac{\alpha(t)}{a^2}\leq 2\frac{\epsilon_0}{a^2}.
\end{split}
\end{equation}
Now, since $\frac{\de f_t(0)}{\de t}=0$ for all $t\in E$, by the Schwarz
lemma and (2) of Definition \ref{spacious} it holds $\left\|\frac{\de f_t(z)}{\de t}\right\|\leq b\|z\|$.
Also, by the Schwarz lemma, $\|h(z)\|\leq \|z\|$. Hence, for all $t\in E\cap [0,T)$
\begin{equation*}
\begin{split}
\Re & \left\langle [d(f_t)_z+\alpha(t)dh_z]^{-1}\left(\frac{\de f_t}{\de t}(z)+
\alpha'(t)h(z)\right), \frac{z}{\|z\|^2}\right\rangle\\&=
\Re \left\langle [d(f_t)_z]^{-1}\frac{\de f_t}{\de t}(z), \frac{z}{\|z\|^2}\right\rangle
\\&+
\Re \left\langle ([d(f_t)_z+\alpha(t)dh_z]^{-1}-[d(f_t)_z]^{-1})\frac{\de f_t}{\de t}(z),\frac{z}{\|z\|^2}\right\rangle
\\&+
\Re\left\langle [d(f_t)_z+\alpha(t)dh_z]^{-1}\alpha'(t)h(z), \frac{z}{\|z\|^2}\right\rangle\\ &\geq a-\|[d(f_t)_z+\alpha(t)dh_z]^{-1}-[d(f_t)_z]^{-1}\|\left\|\frac{\de f_t}{\de t}(z)\right\|\frac{1}{\|z\|}\\&-|\al'(t)|\|[d(f_t)_z+\alpha(t)dh_z]^{-1}\|\|h(z)\|\frac{1}{\|z\|}\geq a-\frac{2b\epsilon_0}{a^2}-\frac{2\epsilon_0}{aT}\geq 0,
\end{split}
\end{equation*}
which proves that $\Re \left\langle G(z,t), \frac{z}{\|z\|^2}\right\rangle \leq 0$
for all $t\in E$ and $z\in \B^n\setminus\{0\}$ and by Remark \ref{simply},
$-G(\cdot,t)\in \mathcal M$ for all $t\in E$.

Hence, $G(z,t)$ is a Herglotz vector field associated with the class
$\mathcal M$ in $\B^n$ and $(f_t+\al(t)h)_{t\geq 0}$ is a normalized
solution to the Loewner-Kufarev PDE associated with $G(z,t)$.
In particular, it is a subordination chain by Theorem \ref{all}.
In order to prove that it is a normalized Loewner chain, we only need
to show that $f_t+\alpha(t)h$ is univalent for all $t\geq 0$.

Let $(g_t)_{t\geq 0}$ be the canonical solution associated with $G(z,t)$.
By Theorem \ref{all}, there exists a holomorphic mapping
$\Phi: \C^n\to \cup_{t\geq 0} (f_t+\alpha(t)h)(\B^n)$
such that $f_t(z)+\al(t)h(z)=\Phi(g_t(z))$ for all $t\geq 0$ and $z\in \B^n$.
Note that, for $t\geq T$, $\al(t)\equiv 0$, hence $f_t(z)=\Phi(g_t(z))$ for all $t\geq T$.
Taking into account that $\cup_{t\geq T}g_t(\B^n)=\C^n$ and $f_t$ is univalent for all $t\geq 0$,
it is easy to see that $\Phi$ is univalent. Therefore, $(f_t+\al(t)h)_{t\geq 0}$
is a normalized Loewner chain.

Finally, note that $\{e^{-t} (f_t+\alpha(t)h)\}_{t\geq 0}$ is a normal family if and only if
$\{e^{-t}f_t\}_{t\geq 0}$ is.
\end{proof}

Not all ``nice'' mappings in the class $S^0$ can be embedded into a normal Loewner chain
which is ger\"aumig in $[0,T)$ for some $T>0$,
as the following example shows:

\begin{example}\label{ese-no}
Let $a\in \C$ and let $f_a(z_1,z_2):=(z_1+az_2^2, z_2)$.
Note that $f_a$ is an automorphism of $\C^2$.
Let $g_a:=f_a|_{\B^2}$. It is known that $g_a\in S^0$ for $|a|\leq 3\sqrt{3}/2$
(see \cite[Example 3]{Su77}), while $g_a\not\in S^0$ for $|a|>2\sqrt{15}$
(see \cite[Remark 3.5]{GHKK}). Let $r_0:=\sup\{r\geq 0: g_r \in S^0\}$.
Then $3\sqrt{3}/2\leq r_0\leq 2\sqrt{15}$. Since $S^0$ is compact, $g_{r_0}\in S^0$.
If $g_{r_0}$ were embeddable in a normal Loewner chain
which is ger\"aumig in $[0,T)$ for some $T>0$,
then by Theorem \ref{variation} there would exist $\epsilon>0$
such that $g_{r_0+\epsilon}\in S^0$, contradicting the definition of $r_0$.
Note however that $g_{r_0}$ is embeddable
into a normalized Loewner chain (which is not a normal Loewner chain)
which is ger\"aumig in $[T_1,T_2)$ for any
$0\leq T_1<T_2<\infty$
given by $g_t(z):=f_{r_0}(e^t z)$.

In a recent paper \cite{B}, the first named author proved that,
in fact, $r_0=\frac{3\sqrt{3}}{2}$ and $g_{r_0}$ is a support point
of the class $S^0$, so that, according to Proposition \ref{p.support-extreme},
$g_{r_0}$ cannot be embedded into any normal Loewner chain which is exponentially squeezing
in some interval of $\R^+$.
\end{example}

\section{support points and extreme points}\label{support}

\begin{definition}
(i)
A mapping $f\in S^0$ is called a {\sl support point} if there exists a
linear operator $L:{\sf Hol}(\B^n, \C^n)\to \C$ which is continuous with respect to the
topology of uniform convergence on compacta of ${\sf Hol}(\B^n, \C^n)$ and not constant on $S^0$
such that $\max_{g\in S^0} \Re L(g)=\Re L(f)$. We denote by ${\sf Supp}(S^0)$ the set of support
points of $S^0$.

(ii)
A mapping $f\in S^0$ is called an {\sl extreme point}
if $f=tg+(1-t)h$, where $t\in (0,1)$, $g,h\in S^0$, implies
$f=g=h$.
We denote by ${\sf Ex}(S^0)$ the set of extreme points of $S^0$.

\end{definition}

\begin{lemma}\label{poly}
Let $L$ be a bounded linear operator on ${\sf Hol}(\B^n,\C^n)$ which is not constant on $S^0$.
Then there exists a polynomial mapping $h:\B^n\to \C^n$, $h(0)=0, dh_0=0$,
such that $\sup_{z\in \B^n} \|h(z)\|\leq 1$, $\sup_{z\in \B^n} \|dh_z\|\leq 1$ and $\Re L(h)>0$.
\end{lemma}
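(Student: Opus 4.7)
The plan is to produce the candidate $h$ in three moves: first extract a non-trivial test map from the hypothesis, then approximate it by a polynomial using continuity of $L$, and finally rescale to obey the norm bounds.

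First, since $L$ is not constant on $S^0$, I can pick $f_1,f_2\in S^0$ with $L(f_1)\neq L(f_2)$, and set $g:=f_1-f_2$. Because both $f_j$ belong to $S^0\subset S$, they satisfy $f_j(0)=0$ and $d(f_j)_0={\sf id}$, so $g$ is a holomorphic mapping on $\B^n$ with $g(0)=0$ and $dg_0=0$, yet $L(g)\neq 0$. Choosing $\lambda\in\{1,-1,i,-i\}$ so that $\Re(\lambda L(g))>0$ and replacing $g$ by $\lambda g$, I may assume $\Re L(g)>0$, while still having $g(0)=0$ and $dg_0=0$.

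Next I would approximate $g$ by a polynomial via its homogeneous expansion $g(z)=\sum_{k\geq 2}P_k(z)$, where each $P_k$ is a $\C^n$-valued homogeneous polynomial of degree $k$. The partial sums $g_N:=\sum_{k=2}^N P_k$ converge to $g$ uniformly on every compact subset of $\B^n$, so the continuity of $L$ in the compact-open topology gives $L(g_N)\to L(g)$. Thus for $N$ large enough $\Re L(g_N)>0$, and by construction $g_N(0)=0$ and $(dg_N)_0=0$.

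Finally, since $g_N$ is a polynomial it is bounded on the bounded set $\B^n$, and so is its differential; let
\[
M:=\max\Bigl\{\,\sup_{z\in\B^n}\|g_N(z)\|,\ \sup_{z\in\B^n}\|(dg_N)_z\|\,\Bigr\}<\infty.
\]
If $M=0$ then $g_N\equiv 0$, which contradicts $\Re L(g_N)>0$, so $M>0$; the mapping $h:=g_N/M$ is a polynomial satisfying $h(0)=0$, $dh_0=0$, $\sup_{\B^n}\|h\|\leq 1$, $\sup_{\B^n}\|dh\|\leq 1$ and $\Re L(h)=\Re L(g_N)/M>0$, as required. There is no real obstacle here: the only subtlety is to check that $dg_0=0$ survives, which holds because both $f_1$ and $f_2$ have the same normalization, and to invoke the compact-open continuity of $L$ in order to pass from $g$ to a polynomial approximation.
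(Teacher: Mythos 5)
Your proof is correct and follows essentially the same route as the paper: use non-constancy of $L$ to extract a map vanishing to second order at the origin on which $L$ is nonzero, pass to a polynomial truncation using the compact-open continuity of $L$, then rescale by a sup-norm constant. The paper streamlines this slightly by comparing $f$ to ${\sf id}\in S^0$ and by selecting a single homogeneous term $P_j$ with $L(P_j)\neq 0$ (which exists since $\sum_{j\geq 2}L(P_j)=L(f)-L({\sf id})\neq 0$), rather than a partial sum, but the underlying mechanism is identical.
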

\begin{proof}
Since $L$ is not constant, there exists $f\in S^0$ such that $L(z)\neq L(f(z))$.
If $f=z+\sum_{j\geq 2}P_j(z)$ is the power series expansion of $f$ at $0$,
then $\sum_{j\geq 2}L(P_j(z))\neq 0$, hence there exists $P_j$ such that $L(P_j)\neq 0$.
Up to multiplication by a suitable complex number $\lambda$, we obtain the result.
\end{proof}

Although our variational method in Theorem \ref{variation} works only for normalized Loewner
chains which are  ger\"aumig in an interval $[0,T)$, $T>0$, it allows to prove the following result:

\begin{lemma}\label{one}
Let  $(f_t)_{t\geq 0}$ be a normal Loewner chain which is ger\"aumig in $[T_1,T_2)$ for some
$0\leq T_1<T_2\leq +\infty$. Then $f_0\not\in {\sf Supp}(S^0)\cup {\sf Ex}(S^0)$.
\end{lemma}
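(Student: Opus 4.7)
The strategy is to use Theorem \ref{variation} to produce explicit perturbations of $f_0$ inside $S^0$, after first shifting time so that the ger\"aumig interval starts at $0$. By Remark \ref{back}, $\tilde f_t := e^{-T_1} f_{T_1+t}$ is a normal Loewner chain that is ger\"aumig in $[0, T_2 - T_1)$. Applying Theorem \ref{variation} to $(\tilde f_t)$ gives $\epsilon_0 > 0$ such that, for each $\epsilon \in (0, \epsilon_0]$ and each admissible $h$ (i.e.\ $h\colon \B^n \to \C^n$ holomorphic with $h(0) = dh_0 = 0$ and $\sup_{\B^n}\|h\|, \sup_{\B^n}\|dh\| \leq 1$), the family $\tilde g_t := \tilde f_t + \alpha(t) h$ is again a normal Loewner chain. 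Since this perturbs $\tilde f_0 = e^{-T_1} f_{T_1}$ rather than $f_0$, I would then propagate the perturbation across $[0, T_1]$ via the transition maps $\varphi_{s,t} := f_t^{-1}\circ f_s$ of $(f_t)$, setting
\[
g_t := \begin{cases} e^{T_1}\,\tilde g_{t-T_1} = f_t + e^{T_1}\alpha(t-T_1)\,h, & t \geq T_1,\\ g_{T_1}\circ \varphi_{t, T_1}, & t \in [0, T_1).\end{cases}
\]
One checks directly that $(g_t)_{t\ge 0}$ is a normalized subordination chain, that each $g_t$ is univalent, and that $\{e^{-t} g_t\}$ is normal (since $g_t \equiv f_t$ for $t \ge T_2$ and the family is continuous in $t$); hence $(g_t)$ is a normal Loewner chain, so $g_0 \in S^0$, and a direct computation yields
\[
g_0 = g_{T_1}\circ \varphi_{0, T_1} = f_0 + \delta\,\psi,\qquad \delta := e^{T_1}\epsilon,\quad \psi := h\circ \varphi_{0, T_1}.
\]

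For non-extremeness of $f_0$, take any nonzero admissible $h$ and apply the construction to $h$ and to $-h$, obtaining $g_0^{\pm} := f_0 \pm \delta\,\psi \in S^0$. Pullback by the univalent map $\varphi_{0,T_1}$ is injective, so $\psi \not\equiv 0$; thus $g_0^+ \neq g_0^-$ and $f_0 = \tfrac12(g_0^+ + g_0^-)$ shows $f_0 \notin {\sf Ex}(S^0)$.

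For non-supportness, suppose toward contradiction that $L$ is continuous linear on ${\sf Hol}(\B^n, \C^n)$, non-constant on $S^0$, with $\Re L(f_0) = \max_{S^0}\Re L$. From $\Re L(g_0^{\pm}) \le \Re L(f_0)$ one deduces $\Re L(\psi) = 0$, and applying the construction to $e^{i\theta} h$ (still admissible) forces $L(h\circ \varphi_{0, T_1}) = 0$ for every admissible $h$. The plan is then to apply Lemma \ref{poly} to the pullback functional $\hat L(h) := L(h\circ \varphi_{0, T_1})$: if $\hat L$ is non-constant on $S^0$, the lemma produces a polynomial $h^{\ast}$ with $h^{\ast}(0) = d(h^{\ast})_0 = 0$ and the required norm bounds such that $\Re \hat L(h^{\ast}) > 0$, directly contradicting the vanishing just derived.

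The main obstacle is therefore to show that non-constancy of $L$ on $S^0$ passes to non-constancy of $\hat L$ on $S^0$. I would attack this via the affine map $\Lambda \colon S^0 \to S^0$, $\Lambda(F) := e^{T_1}\,F\circ \varphi_{0, T_1}$ (well-defined by an argument parallel to the construction of $(g_t)$ with $\tilde g_t$ replaced by the normal Loewner chain attached to $F$), which satisfies $\Lambda(\tilde f_0) = f_0$ and $\hat L = e^{-T_1}\,L\circ \Lambda$. Combining this with the full variational freedom in the ger\"aumig hypothesis---in particular by also varying the shift time $T'_1 \in [T_1, T_2)$ in Remark \ref{back}, which enlarges the family of admissible pullbacks $\bigcup_{T'_1} \varphi_{0, T'_1}^{\ast}$---is expected to produce two elements of $S^0$ on which $\hat L$ takes distinct values, thereby completing the contradiction and proving the lemma.
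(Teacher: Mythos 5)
Your construction of the perturbed chain $(g_t)$---pushing the variation of $e^{-T_1}f_{T_1}$ backward in time through the transition maps $\varphi_{t,T_1}$---is sound: the family you define is a normalized subordination chain with univalent elements, and normality follows because $g_t=f_t$ for $t\geq T_2$. For extreme points your argument is complete and is in fact a genuine alternative to the paper's route: you exhibit $f_0=\tfrac12(f_0+\delta\psi)+\tfrac12(f_0-\delta\psi)$ directly with $\psi=h\circ\varphi_{0,T_1}\not\equiv 0$, rather than first invoking \cite[Theorem 2.1]{GHKK1} (as the paper does) to reduce to $T_1=0$.

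For support points, however, you have correctly identified the gap but not closed it. The paper sidesteps the problem entirely by citing \cite[Theorem 1.1]{Schl}: if $f_0$ is a support point then so is $e^{-T_1}f_{T_1}$, so one may assume $T_1=0$ and apply Lemma \ref{poly} to $L$ itself. Your route instead needs the pullback functional $\hat L(h)=L(h\circ\varphi_{0,T_1})$ to be non-constant on $S^0$, which is precisely \cite[Proposition 2.6]{Schl}, the technical core of the theorem the paper cites; it is not elementary. The vanishing $L(h\circ\varphi_{0,T_1})=0$ for all admissible $h$ only says that $L$ annihilates the space $V=\{h\circ\varphi_{0,T_1}:\ h\in{\sf Hol}(\B^n,\C^n),\ h(0)=dh_0=0\}$. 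Since $\varphi_{0,T_1}(\B^n)$ is a relatively compact subset of $\B^n$, every element of $V$ is bounded, so $V$ is a proper subspace of the maps vanishing to second order at $0$, and there is no immediate contradiction with $L$ being non-constant on $S^0$. The proposed remedy of varying the shift time does not help: one cannot take $T_1'<T_1$ (the ger\"aumig condition may fail there, so there is no limit $\varphi_{0,T_1'}\to{\sf id}$ available), and for $T_1'>T_1$ one has $h\circ\varphi_{0,T_1'}=(h\circ\varphi_{T_1,T_1'})\circ\varphi_{0,T_1}\in V$, so the corresponding spaces only shrink. To complete the proof along your lines you would have to reprove the analogue of \cite[Proposition 2.6]{Schl}; otherwise, do as the paper does and invoke \cite[Theorem 1.1]{Schl} to reduce to $T_1=0$ at the outset.
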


\begin{proof}
By \cite[Theorem 2.1]{GHKK1} and \cite[Theorem 1.1]{Schl}, if $f_0$ is an extreme point or a
support point, then so is $e^{-T_1}f_{T_1}$. Thus, it is enough to prove that
$e^{-T_1}f_{T_1}\not\in {\sf Supp}(S^0)\cup {\sf Ex}(S^0)$. By Remark \ref{back},
$e^{-T_1}f_{T_1}$ is embeddable into a normal Loewner chain which is ger\"aumig in $[0,T_2-T_1)$,
therefore, we can assume with no loss of generality that $T_1=0$.

Let $L$ be a bounded linear operator on ${\sf Hol}(\B^n,\C^n)$ which is not constant on $S^0$
and let $h$ be given by Lemma \ref{poly}. By Theorem \ref{variation},
there exists $\epsilon>0$ such that $f_0\pm\epsilon h\in S^0$.
But $\Re L(f_0+\epsilon h)=\Re L(f_0)+\epsilon \Re L(h)>\Re L(f_0)$, and $f_0$
is not a support point for $L$.

Also, since
\[
f_0=\frac{1}{2}(f_0+\epsilon h)+\frac{1}{2}(f_0-\epsilon h),
\]
$f_0$ is not an extreme point of $S^0$.
\end{proof}

Proposition \ref{p.support-extreme} follows then at once from
Theorem \ref{squeez-geraumig} and Lemma \ref{one}.

As we mentioned in the Introduction, our variational method can be
applied also to non-linear problem. As an example we prove the following result:

\begin{proposition}\label{nonlinear}
Let  $(f_t)_{t\geq 0}$ be a  normal Loewner chain which is exponentially
squeezing in $[T_1,T_2)$ for some $0\leq T_1<T_2\leq +\infty$.
Let $\Phi:{\sf Hol}(\B^n, \C^n)\to \C$ be a Fr\'echet differentiable
functional such that the Fr\'echet differential $L(f_0;\cdot)$ of $\Phi$
at $f_0\in S^0$ is  not constant on $S^0$. Then $f_0$ is not a maximum of $\Re \Phi$ in $S^0$.
\end{proposition}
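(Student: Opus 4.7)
The plan is to imitate the proof of Lemma \ref{one} with the Fr\'echet differential $L(f_0;\cdot)$ in place of the linear functional, and to derive a contradiction from the first-order Fr\'echet expansion of $\Phi$ at $f_0$. Assume $f_0$ is a maximum of $\Re\Phi$ on $S^0$. Since $L(f_0;\cdot)$ is a continuous linear functional on ${\sf Hol}(\B^n,\C^n)$ which is non-constant on $S^0$ by hypothesis, Lemma \ref{poly} applied to $L(f_0;\cdot)$ supplies a polynomial mapping $h:\B^n\to\C^n$ with $h(0)=0$, $dh_0=0$, $\sup_{\B^n}\|h\|\leq 1$, $\sup_{\B^n}\|dh\|\leq 1$, and $\Re L(f_0;h)>0$.

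Next I produce a perturbation of $f_0$ within $S^0$. By Theorem \ref{squeez-geraumig} the chain $(f_t)$ may be replaced by a normal Loewner chain $(g_t)_{t\geq 0}$ with $g_0=f_0$ that is ger\"{a}umig on $[T_1',T_2']$ for any $T_1<T_1'<T_2'<T_2$; the time shift of Remark \ref{back} then yields $(\tilde g_s)_{s\geq 0}:=(e^{-T_1'}g_{T_1'+s})$ ger\"{a}umig on $[0,T_2'-T_1')$. Applying Theorem \ref{variation} to $(\tilde g_s)$ and the polynomial $h$ produces, for small $\epsilon>0$, a normal Loewner chain $(\tilde g_s+\alpha(s)h)$ starting at $\tilde g_0+\epsilon h$. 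Pulling back to original time and extending over $[0,T_1']$ by composition with the original evolution family $\varphi_{t,T_1'}$ of $(g_t)$ (a routine verification shows that this patches into a normal Loewner chain) gives a normal Loewner chain $(\Gamma_t)_{t\geq 0}$ with initial value
\[
\Gamma_0 \,=\, f_0 + \epsilon\,e^{T_1'}\bigl(h\circ\varphi_{0,T_1'}\bigr) \,\in\, S^0,
\]
so that the Fr\'echet expansion of $\Phi$ at $f_0$ yields
\[
\Re\Phi(\Gamma_0)\,=\,\Re\Phi(f_0)\,+\,\epsilon\,\Re L\bigl(f_0;\,e^{T_1'}(h\circ\varphi_{0,T_1'})\bigr)\,+\,o(\epsilon).
\]
A positive $\epsilon$-coefficient would then give $\Re\Phi(\Gamma_0)>\Re\Phi(f_0)$ for small $\epsilon$, contradicting maximality.

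The main obstacle is precisely to establish that $\Re L(f_0;\,e^{T_1'}(h\circ\varphi_{0,T_1'}))>0$: the available perturbation direction $e^{T_1'}(h\circ\varphi_{0,T_1'})$ is not $h$ itself, so the positivity $\Re L(f_0;h)>0$ does not transfer automatically. When $T_1=0$ the obstacle disappears, since $T_1'$ can be taken arbitrarily small and $\varphi_{0,T_1'}\to{\sf id}$ as $T_1'\to 0^+$; continuity of $L(f_0;\cdot)$ then gives $\Re L(f_0;e^{T_1'}(h\circ\varphi_{0,T_1'}))\to\Re L(f_0;h)>0$, which is therefore positive for $T_1'$ sufficiently small. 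For general $T_1>0$ the plan is to first reduce to $T_1=0$ through the Fr\'echet analog of the preservation results \cite[Theorem 2.1]{GHKK1} and \cite[Theorem 1.1]{Schl}: introduce the pushed-forward functional $\hat\Phi(\hat g):=\Phi(e^{T_1'}\hat g\circ\varphi_{0,T_1'})$, which satisfies $\hat\Phi(e^{-T_1'}g_{T_1'})=\Phi(f_0)$, check via the chain rule that its Fr\'echet differential at $e^{-T_1'}g_{T_1'}$ remains non-constant on $S^0$, and apply the $T_1=0$ case to the shifted chain to close the argument.
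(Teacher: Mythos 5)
Your proposal ultimately lands on the same method the paper uses, but it takes a detour that creates an artificial obstacle and then resolves it by falling back on the paper's method anyway. The paper's proof does the pushforward \emph{first}: assuming $f_0$ is a maximum of $\Re\Phi$, it argues (as in Schleissinger) that $e^{-T_1}f_{T_1}$ is then a maximum of $\Re\Psi$ with $\Psi(g)=\Phi(e^{T_1}g\circ v_{T_1})$, computes that the Fr\'echet differential of $\Psi$ at $e^{-T_1}f_{T_1}$ is $L(e^{-T_1}f_{T_1};g)=L(f_0;e^{T_1}g\circ v_{T_1})$, checks it is non-constant on $S^0$, and only \emph{then} applies Lemma \ref{poly} to this pushed-forward differential and perturbs via Theorems \ref{squeez-geraumig} and \ref{variation}. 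Because $h$ is chosen for the pushed-forward functional, there is no sign issue to fix. You instead choose $h$ first for $L(f_0;\cdot)$, then shift, perturb, and pull the perturbation back, which (correctly) produces the initial value $f_0+\epsilon e^{T_1'}(h\circ\varphi_{0,T_1'})$ and hence the coefficient $\Re L(f_0;e^{T_1'}(h\circ\varphi_{0,T_1'}))$, whose sign is not controlled. Your $T_1=0$ continuity workaround ($T_1'\to 0^+$) does work --- for each fixed small $T_1'$ you still get some $\epsilon_0(T_1')>0$ from Theorem \ref{variation}, and that suffices --- but it is unnecessary, since the pushforward argument you then invoke for general $T_1$ handles all cases uniformly. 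One small point in your favor: by pushing forward explicitly to time $T_1'\in(T_1,T_2)$ rather than $T_1$, you correctly account for the fact that Remark \ref{back} changes the initial element of the chain, a bookkeeping detail the paper glosses over when it says ``we can assume WLOG that $f$ is embeddable into a normal Loewner chain which is ger\"aumig in $[0,T)$.'' So the proposal is correct in substance and essentially the paper's argument, just with extra, dispensable scaffolding around the order in which $h$ is selected.
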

\begin{proof}
Arguing as in the proof of \cite[Theorem 1.1]{Schl}, one can show that if $f=f_0$
is a maximum in $S^0$ for some Fr\'echet differentiable functional $\Phi$, then so is $e^{-T_1}f_{T_1}$
for the Fr\'echet differentiable functional $\Psi$,
where
\[
\Psi(g)=\Phi(e^{T_1}g\circ v_{T_1}),
\quad g\in {\sf Hol}(\B^n, \C^n),
\]
$v_t=v_{0,t}$, and $(v_{s,t})_{0\leq s\leq t}$ is the evolution family associated with $(f_t)_{t\geq 0}$.
Since
\begin{eqnarray*}
\Psi(e^{-T_1}f_{T_1}+g)&=&\Phi(f+e^{T_1}g\circ v_{T_1})
\\
&=&
\Psi(e^{-T_1}f_{T_1})+L(f;e^{T_1}g\circ v_{T_1})+o(\|e^{T_1}g\circ v_{T_1}\|),
\end{eqnarray*}
the Fr\'echet differential $L(e^{-T_1}f_{T_1};\cdot)$ of $\Psi$ at $e^{-T_1}f_{T_1}$ is
\[
L(e^{-T_1}f_{T_1};g)=L(f;e^{T_1}g\circ v_{T_1}),
\quad g\in {\sf Hol}(\B^n, \C^n).
\]
Since $L(f;\cdot):{\sf Hol}(\B^n, \C^n)\to \C$ is a  linear operator
which is continuous with respect to the topology of uniform convergence on
compacta of ${\sf Hol}(\B^n, \C^n)$ and is not constant on $S^0$,
arguing as in the proof of \cite[Proposition 2.6]{Schl},
one can show that $L(e^{-T_1}f_{T_1};\cdot)$ is not constant on $S^0$.
Thus, by Theorem \ref{squeez-geraumig} and Remark \ref{back},
we can assume with no loss of generality that $f$
is embeddable into a normal Loewner chain which is ger\"aumig in $[0,T)$ for some $T>0$.

Then, let $\epsilon_0>0$ be given by Theorem \ref{variation}.
Hence, for every $h: \B^n\to\C^n$ holomorphic with $h(0)=dh_0=0$ and
$\sup_{z\in \B^n}\|h(z)\|\leq 1$, $\sup_{z\in \B^n}\|dh_z\|\leq 1$ it follows that
$f+\epsilon h\in S^0$ for all $\epsilon\in [0,\epsilon_0)$. Let $h$ be given by Lemma \ref{poly}. Then
\[
\frac{1}{\epsilon}\Re [\Phi(f+\epsilon h)- \Phi(f)]= \Re L (f;h)+\Re \frac{o(\epsilon \|h\|)}{\epsilon}.
\]
Therefore, for $\epsilon$ sufficiently small, it follows that $\Re \Phi(f+\epsilon h)>\Re \Phi(f)$.
\end{proof}

We give some applications of Proposition \ref{p.support-extreme}.

\begin{proposition}\label{middle}
Let $g\in S^0$ and let $r\in (0,1)$. Also, let $f(z):=\frac{1}{r}g(rz)$. Then $f\in S^0$ and
$f\not\in {\sf Supp}(S^0)\cup  {\sf Ex}(S^0)$.
\end{proposition}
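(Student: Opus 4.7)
The plan is to exhibit $f$ as the initial member of a normal Loewner chain which is exponentially squeezing on a nontrivial initial interval, and then invoke Proposition \ref{p.support-extreme}. This will simultaneously establish $f\in S^0$ and its failure to be an extreme or support point.

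Set $T:=\log(1/r)>0$, so that $e^{-T}=r$. Since $g\in S^0$, by Remark (i) of the excerpt there is a normal Loewner chain $(g_t)_{t\geq 0}$ with $g_0=g$. I would define a candidate chain by gluing a radial dilation to a time-shifted copy of $(g_t)$:
\[
f_t(z):=\begin{cases} \dfrac{1}{r}\,g(re^{t}z), & 0\leq t\leq T,\\[4pt] e^{T}\,g_{t-T}(z), & t\geq T.\end{cases}
\]
Both pieces agree at $t=T$ (each yields $e^{T}g(z)$), and a direct check shows $f_0=f$, $f_t(0)=0$, $d(f_t)_0=e^{t}\,\mathsf{id}$, together with continuity and univalence of each $f_t$.

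Next I would verify the subordination structure. For $0\leq s\leq t\leq T$ one has $f_s(z)=f_t(e^{s-t}z)$; for $T\leq s\leq t$ one uses the evolution family $(\widetilde{\varphi}_{u,v})$ of $(g_t)$ to write $f_s=f_t\circ \widetilde{\varphi}_{s-T,t-T}$; and for $0\leq s\leq T\leq t$ one composes the two. Normality of $\{e^{-t}f_t\}_{t\geq 0}$ is immediate: on $[0,T]$ it equals $e^{T-t}g(e^{t-T}z)$, which is locally uniformly bounded, and on $[T,\infty)$ it reduces to $\{e^{-(t-T)}g_{t-T}\}$, normal by hypothesis on $(g_t)$. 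This shows $(f_t)_{t\geq 0}$ is a normal Loewner chain with $f_0=f$, hence $f\in S^0$.

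Finally, for $0\leq s<t<T$ the identity $f_s(z)=f_t(e^{s-t}z)$ gives $\|f_t^{-1}(f_s(z))\|=e^{s-t}\|z\|$, so $(f_t)_{t\geq 0}$ is exponentially squeezing in $[0,T)$ with squeezing ratio $a=1$. Proposition \ref{p.support-extreme} then yields $f\notin \mathsf{Supp}(S^0)\cup\mathsf{Ex}(S^0)$. The only genuinely delicate point is the matching of the two pieces into a bona fide Loewner chain at the junction $t=T$; everything else is routine, since the essence of the construction is that on $[0,T]$ the chain is literally the trivial radial dilation, which is maximally squeezing.
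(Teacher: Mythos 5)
Your construction is correct, and it is genuinely different from the paper's. The paper keeps the given chain $(g_t)$ and rescales it, setting $f_t(z):=\tfrac{1}{r}g_t(rz)$; it then shows this chain is exponentially squeezing on all of $[0,\infty)$, and the key technical step there is a compactness argument in the Carath\'eodory class: one has to rule out $\Re\langle G(z,t),z\rangle$ getting arbitrarily close to $0$ on $\overline{B(0,r)}$, which requires extracting a convergent subsequence of Carath\'eodory functions and deriving a contradiction. You avoid that entirely. By gluing the trivial radial dilation $\tfrac{1}{r}g(re^tz)$ on $[0,T]$ (with $T=\log(1/r)$) to the shifted chain $e^Tg_{t-T}$ on $[T,\infty)$, you obtain exponential squeezing on $[0,T)$ with ratio $a=1$ by inspection, and that finite initial window is all that Proposition \ref{p.support-extreme} requires. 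Your verification of the subordination structure across the junction $t=T$ and the normality of $\{e^{-t}f_t\}$ is complete, so the argument stands. The trade-off is that the paper's chain is squeezing globally (a slightly stronger structural fact, though unused here), whereas yours is squeezing only on the initial interval; in exchange you get a shorter, more transparent proof whose geometry — first inflate $f(\B^n)$ radially to $\tfrac{1}{r}g(\B^n)$, then follow the original evolution — is easy to visualize.
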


\begin{proof}
Let $(g_t)_{t\geq 0}$ be a normal Loewner chain such that $g_0=g$.
Set $f_t(z):=\frac{1}{r}g_t(rz)$. Then $(f_t)_{t\geq 0}$ is a normal Loewner chain such that $f_0=f$,
which thus belongs to $S^0$.

Next, recall that $\{g_t\}_{t\geq 0}$ satisfies the Loewner PDE
\begin{equation}\label{PDE}
\frac{\de g_t}{\de t}(z)=-d(g_t)_zG(z,t),\quad \hbox{ a.e. }\, t\geq 0,\, \forall z\in \B^n,
\end{equation}
where  $G(z,t)$ is a Herglotz vector field in $\B^n$ associated with the class $\mathcal M$.

Finally, since $-G(z,t)$ is in the class $\mathcal M$, for a.e. $t\geq 0$, it follows
from \eqref{PDE} that there exists $c>0$ (depending only on $r$) such that
for all $\|z\|\leq r$ and a.e. $t\geq 0$
\begin{equation}\label{mec}
\Re \left\langle [d(g_t)_z]^{-1}\frac{\de g_t}{\de t}(z), \frac{z}{\|z\|^2}\right\rangle \geq c.
\end{equation}
Indeed, let $E\subset [0,+\infty)$ be a set of full measure such that $G(z,t)$ is a
Herglotz vector field associated with the class $\mathcal M$, namely,
$\Re\left\langle G(z,t), \frac{z}{\|z\|^2}\right\rangle <0$ for all $z\in \B^n$ and $t\in E$.
Suppose by contradiction \eqref{mec} does not hold for some sequence $\{t_k\}_{k\in \N}\subset E$.
Then by \eqref{PDE} there exists a sequence $\{z_k\}_{k\in \N}\subset \B(0,r)$,
which we may suppose convergent to some $z_0$, with $\|z_0\|\leq r$ such that
$\Re\left\langle G(z_k,t_k), \frac{z_k}{\|z_k\|^2}\right\rangle>-1/k$.
Since $G(0,t_k)=0$ and $dG(0,t_k)=-{\sf id}$, clearly $z_0\neq 0$.
Let $v_k:=z_k/\|z_k\|$ and consider the holomorphic functions $g_k:\D \to \C$ defined by
\[
g_k(\zeta):=\langle G(\zeta v_k, t_k), v_k\rangle.
\]
Then $g_k(\zeta)=-\zeta p_k(\zeta)$ where $\Re p_k(\zeta)>0$ for all $\zeta\in \D$ and $p_k(0)=1$,
for all $k\in \N$. Then $\{p_k\}_{k\in \N}$ is a sequence of functions in the Carath\'eodory class.
In particular, since such a class is compact, we can assume that $p_k\to p$ for some holomorphic
function $p:\D \to \C$ with $p(0)=1$ and $\Re p(\zeta)>0$ for all $\zeta\in \D$.
But $\Re p_k(\|z_k\|)\to \Re p(\|z_0\|)$, which forces $\Re p(\|z_0\|)=0$, a contradiction.
Hence \eqref{mec} holds.

Now, since $[d(f_t)_z]^{-1}\frac{\de f_t}{\de t}(z)=\frac{1}{r}[d(g_t)_{rz}]^{-1}\frac{\de g_t}{\de t}(rz)$,
it follows from \eqref{mec} that for all $z\in \B^n\setminus\{0\}$ and a.e. $t\geq 0$, it holds
\[
\Re \left\langle [d(f_t)_z]^{-1}\frac{\de f_t}{\de t}(z),\frac{z}{\|z\|^2}\right\rangle=
\Re \left\langle [d(g_t)_{rz}]^{-1}\frac{\de g_t}{\de t}(rz), \frac{rz}{\|rz\|^2}\right\rangle \geq c.
\]
Hence $(f_t)_{t\geq 0}$ is exponentially squeezing  in $[0,+\infty)$.
By Proposition \ref{p.support-extreme}, it follows that $f\not\in {\sf Supp}(S^0)\cup {\sf Ex}(S^0)$.
\end{proof}

A class of mappings in $S^0$ which are not extreme/support points of $S^0$ may be
obtained in the following way:

\begin{proposition}
\label{c.ex-sup}
Let $(f_t)_{t\geq 0}$ be a normal Loewner chain and let $G(z,t)$ be the corresponding
Herglotz vector field associated with the
class ${\mathcal M}$. Assume that
$$G(z,t)=-[{\sf id}-E(z,t)]^{-1}[{\sf id}+E(z,t)](z),\quad z\in\B^n,\quad t\geq 0,$$
where $E(z,t)$ is an $(n\times n)$-matrix which is holomorphic with respect to $z\in\B^n$,
$E(0,t)=0$, for $t\geq 0$, and $E(z,t)$ is
measurable with respect to $t\in [0,\infty)$, for $z\in\B^n$.
If $\|E(z,t)\|\leq c<1$ for
$z\in\B^n$ and $t\geq 0$, then $f_0\not\in {\sf Supp}(S^0)\cup {\sf Ex}(S^0)$.
\end{proposition}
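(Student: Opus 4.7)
The plan is to show that the hypothesis forces $(f_t)_{t\geq 0}$ to be exponentially squeezing on all of $[0,+\infty)$ with a squeezing ratio depending only on $c$, and then invoke Proposition \ref{p.support-extreme} directly.

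First I would use the Loewner-Kufarev PDE \eqref{LoewnerPDE} to rewrite the squeezing inequality \eqref{squeezing} as
\[
\Re\left\langle -G(z,t),\,\frac{z}{\|z\|^{2}}\right\rangle \geq a,
\]
so that everything reduces to establishing a uniform lower bound on $\Re\langle -G(z,t),z\rangle/\|z\|^{2}$. The whole point of the hypothesis is that $-G(\cdot,t)$ is the matrix-valued Cayley transform of $E(\cdot,t)$ applied to $z$; since $\|E(z,t)\|\leq c<1$ the inverse $(\mathrm{id}-E(z,t))^{-1}$ exists, is bounded by $1/(1-c)$, and $\mathrm{id}-E$ itself has norm at most $1+c$.

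Next, setting $w=w(z,t):=-G(z,t)=[\mathrm{id}-E]^{-1}[\mathrm{id}+E](z)$, two algebraic identities do all the work:
\[
w+z=2\,[\mathrm{id}-E]^{-1}(z),\qquad w-z=E\,(w+z).
\]
The first gives $\|w+z\|\geq 2\|z\|/\|\mathrm{id}-E\|\geq 2\|z\|/(1+c)$, while the second gives $\|w-z\|\leq c\,\|w+z\|$. The polarization identity in the Hermitian inner product on $\C^{n}$,
\[
4\Re\langle w,z\rangle=\|w+z\|^{2}-\|w-z\|^{2},
\]
then yields
\[
4\Re\langle w,z\rangle\;\geq\;(1-c^{2})\,\|w+z\|^{2}\;\geq\;\frac{4(1-c^{2})}{(1+c)^{2}}\,\|z\|^{2}\;=\;\frac{4(1-c)}{1+c}\,\|z\|^{2}.
\]
Dividing by $\|z\|^{2}$ shows that \eqref{squeezing} holds for every $t\geq 0$ and every $z\in\B^{n}\setminus\{0\}$ with the uniform squeezing ratio $a=(1-c)/(1+c)\in(0,1]$.

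Thus $(f_t)_{t\geq 0}$ is a normal Loewner chain which is exponentially squeezing on $[0,+\infty)$, and Proposition \ref{p.support-extreme} gives $f_0\notin {\sf Supp}(S^{0})\cup{\sf Ex}(S^{0})$. There is no serious obstacle here: the only subtlety is recognizing the Cayley-transform structure of $-G$ and noting that the two identities for $w\pm z$ combined with polarization translate the operator-norm bound $\|E\|\leq c$ into a pointwise lower bound on $\Re\langle -G(z,t),z\rangle$. Everything else (invertibility of $\mathrm{id}-E$, measurability of $G$ in $t$) is automatic from the hypotheses.
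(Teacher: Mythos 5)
Your proof is correct and takes essentially the same route as the paper's: both arguments use the Cayley-transform structure of $-G$ to derive a pointwise lower bound on $\Re\langle -G(z,t),z\rangle/\|z\|^2$ via the identities relating $w\pm z$ to $E$, conclude exponential squeezing on $[0,\infty)$, and then invoke Proposition \ref{p.support-extreme}. The one difference is that the paper first applies the Schwarz lemma (using $E(0,t)=0$) to sharpen the operator bound to $\|E(z,t)\|\leq c\|z\|$, which produces the finer estimate $\Re\langle -G(z,t),z\rangle\geq \|z\|^2\frac{1-c\|z\|}{1+c\|z\|}$; your version skips this refinement and works directly with $\|E\|\leq c$ to obtain the uniform bound $\Re\langle -G(z,t),z\rangle\geq \frac{1-c}{1+c}\|z\|^2$. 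Since the paper's sharper pointwise bound has the same infimum $\frac{1-c}{1+c}$ over $\B^n$, the Schwarz-lemma step buys nothing for this particular conclusion, and your slightly more economical argument is perfectly adequate.
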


\begin{proof}
Since $\|E(z,t)\|\leq c$ and $E(0,t)=0$, it follows that $\|E(z,t)\|\leq c\|z\|$, by
the Schwarz lemma. Let
$$h(z,t):=[d(f_t)_z]^{-1}\frac{\de f_t}{\de t}(z)=[{\sf id}-E(z,t)]^{-1}[{\sf id}+E(z,t)](z).$$
Then it easily seen that
$$\|h(z,t)-z\|^2\leq c^2\|z\|^2\|h(z,t)+z\|^2,\quad z\in\B^n,\quad t\geq 0.$$
Now, elementary computations yield that
$$\|z\|^2\frac{1-c\|z\|}{1+c\|z\|}\leq \Re\langle h(z,t),z\rangle
\leq\|z\|^2\frac{1+c\|z\|}{1-c\|z\|},
\quad z\in\B^n,\quad t\geq 0,$$
and thus $(f_t)_{t\geq 0}$ is exponentially squeezing in $[0,+\infty)$.
The result follows from Proposition \ref{p.support-extreme}. This
completes the proof.
\end{proof}

\begin{corollary}\label{close}
Let $f\in S$ be such that  $\|df_z-{\sf id}\|\leq c$ for some $c\in (0,1)$ and for all $z\in \B^n$.
Then $f\in S^0$ and $f\not\in {\sf Supp}(S^0)\cup {\sf Ex}(S^0)$.
\end{corollary}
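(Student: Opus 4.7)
The plan is to realize $f$ as the initial mapping $f_0$ of an explicit normal Loewner chain whose Herglotz vector field has the Cayley form demanded by Proposition \ref{c.ex-sup}. The natural candidate is the linear dilation
\[
f_t(z):=e^t z + (f(z)-z),\quad z\in\B^n,\ t\geq 0,
\]
which satisfies $f_0=f$, $f_t(0)=0$ and $d(f_t)_0=e^t{\sf id}$ without any computation.

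I would first check that $(f_t)_{t\geq 0}$ is a normal Loewner chain. Writing $g(z):=f(z)-z$, the hypothesis gives $\|dg_z\|\leq c<1$; since $\B^n$ is convex, the mean value inequality makes $g$ a $c$-Lipschitz mapping on $\B^n$, so that
\[
\|f_t(z_1)-f_t(z_2)\|\geq (e^t-c)\|z_1-z_2\|>0,
\]
which gives univalence of each $f_t$ for $t\geq 0$. Normality of $\{e^{-t}f_t\}$ follows from the uniform bound $\|e^{-t}f_t(z)\|\leq 1+ce^{-t}\leq 1+c$ on $\B^n$, since $f$ itself is $(1+c)$-Lipschitz. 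Finally, a direct computation yields $\de_t f_t(z)=e^t z$ and $d(f_t)_z=e^t[{\sf id}+e^{-t}A(z)]$ with $A(z):=df_z-{\sf id}$; the bound $\|e^{-t}A(z)\|\leq c<1$ makes $d(f_t)_z$ invertible, so the Loewner-Kufarev vector field
\[
G(z,t):=-[d(f_t)_z]^{-1}\de_t f_t(z)=-[{\sf id}+e^{-t}A(z)]^{-1}z
\]
is well defined; one then checks $-G(\cdot,t)\in\Ma$ via a short Neumann-series estimate combined with $A(0)=0$, and Theorem \ref{all}(3) together with the univalence above promotes $(f_t)_{t\geq 0}$ from a normalized subordination chain to a genuine (normal) Loewner chain.

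The decisive step is then to rewrite $G$ in the Cayley form required by Proposition \ref{c.ex-sup}. Solving the matrix identity $[{\sf id}-E]^{-1}[{\sf id}+E]=[{\sf id}+e^{-t}A(z)]^{-1}$ for $E(z,t)$ (using that $A(z)$ commutes with $[{\sf id}+e^{-t}A(z)]^{-1}$) yields
\[
E(z,t)=-e^{-t}A(z)\bigl[2{\sf id}+e^{-t}A(z)\bigr]^{-1},
\]
and a Neumann series estimate together with $\|A(z)\|\leq c$ gives $\|E(z,t)\|\leq c/(2-c)<1$ as well as $E(0,t)=0$. Proposition \ref{c.ex-sup} then immediately delivers $f=f_0\notin{\sf Supp}(S^0)\cup{\sf Ex}(S^0)$, and the membership $f\in S^0$ comes for free since $f$ is the initial mapping of a normal Loewner chain.

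I expect the univalence of each $f_t$ to be the main conceptual obstacle: unlike in one variable, invertibility of $d(f_t)_z$ does not suffice in $\C^n$, and it is precisely the convexity of $\B^n$ combined with the global Lipschitz bound $c<1$ that unlocks the argument. The remaining steps are essentially bookkeeping with the Neumann series, and alternatively one could skip Proposition \ref{c.ex-sup} altogether and verify exponential squeezing directly (obtaining the ratio $(1-c)/(1+c)^2$) before applying Proposition \ref{p.support-extreme}.
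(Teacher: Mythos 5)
Your proof is correct in substance but takes a genuinely different route from the paper. The paper recycles the chain $f_t(z)=f(e^{-t}z)+(e^t-e^{-t})z$ from \cite[Lemma 2.2]{GHK}, which is already known to be a normal Loewner chain, and whose Herglotz field falls into Cayley form with the remarkably clean $E(z,t)=e^{-2t}[{\sf id}-df_{e^{-t}z}]$, giving $\|E\|\leq c$ with no effort. You instead invent the chain $f_t(z)=e^tz+(f(z)-z)$, which must be shown to be a normal Loewner chain from scratch (univalence via the $c$-Lipschitz bound, normality via boundedness, and then the Herglotz condition), and whose Cayley matrix $E(z,t)=-e^{-t}A(z)[2\,{\sf id}+e^{-t}A(z)]^{-1}$ is messier and has the worse bound $c/(2-c)$ --- still $<1$, so Proposition \ref{c.ex-sup} applies. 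What your chain buys is transparency: it is the most elementary deformation from $f$ to the identity, and the whole computation is a closed Neumann-series bookkeeping exercise with no hidden appeal to an external lemma.

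One point deserves care. You say $-G(\cdot,t)\in\mathcal M$ follows ``via a short Neumann-series estimate combined with $A(0)=0$''. The naive estimate --- writing $[{\sf id}+e^{-t}A(z)]^{-1}z=z-e^{-t}A(z)[{\sf id}+e^{-t}A(z)]^{-1}z$ and bounding the error term by $\frac{c}{1-c}\|z\|^2$ (or $\frac{c\|z\|}{1-c\|z\|}\|z\|^2$ after Schwarz) --- only shows $\Re\langle -G(z,t),z\rangle>0$ for $c<\tfrac12$, so as stated this step has a gap for $c\in[\tfrac12,1)$. The gap is easily closed: either derive the Cayley form first and use the identity $w-z=E(w+z)$ (as in the proof of Proposition \ref{c.ex-sup}) to get $\Re\langle h(z,t),z\rangle>0$ with $\|E\|\leq c/(2-c)<1$, or --- more cleanly, and this is the ``alternative'' you mention at the end --- set $w=-G(z,t)$ so that $z=w+e^{-t}A(z)w$ and deduce $\Re\langle w,z\rangle\geq(1-c)\|w\|^2$ and $\|w\|\geq\|z\|/(1+c)$, hence $\Re\langle -G(z,t),z\rangle\geq\frac{1-c}{(1+c)^2}\|z\|^2$. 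This last estimate simultaneously proves $-G(\cdot,t)\in\mathcal M$ and exponential squeezing in $[0,+\infty)$ with ratio $\frac{1-c}{(1+c)^2}$, so you can bypass Proposition \ref{c.ex-sup} entirely and invoke Proposition \ref{p.support-extreme}; with that rearrangement the argument is fully self-contained and correct for all $c\in(0,1)$.
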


\begin{proof}
A normal Loewner chain with initial element $f$ is given by
$(f_t)_{t\geq 0}$ with $f_t(z)=f(e^{-t}z)+(e^t-e^{-t})z$ (see \cite[Proof of Lemma 2.2]{GHK}).
A direct computation shows that
\[
h(z,t):=[d(f_t)_z]^{-1}\frac{\de f_t}{\de t}(z)=[{\sf id}- E(z,t)]^{-1}[{\sf id}+E(z,t)](z),
\]
where $E(z,t)=e^{-2t}[{\sf id}-df_{e^{-t}z}]$. In view of the hypothesis, we deduce that
$\|E(z,t)\|\leq c$, for all $z\in\B^n$ and $t\geq 0$, and thus the result follows from
Proposition \ref{c.ex-sup}.
\end{proof}

\begin{remark}
Corollary \ref{close}
can be proved directly without inspecting the natural normal Loewner chain.
Indeed, if $L$ is a bounded linear functional
not constant on $S^0$ and $h$ is given by Lemma \ref{poly}, it is easy to see that
there exists $\epsilon>0$ such that $\|df_z\pm\epsilon dh_z-{\sf id}\|<1$ for all $z\in \B^n$.
Hence by \cite[Lemma 2.2]{GHK}, $f\pm\epsilon h\in S^0$ and then
$f\not\in{\sf Supp}(S^0)\cup {\sf Ex}(S^0)$.
\end{remark}

\section{Mappings that can be evolved in finite time to
a ball and time-$\log M$ reachable mappings}\label{Sreach}

A natural class of mappings in $S^0$ where our construction applies is that of
mappings whose image can be evolved to a ball:

\begin{definition}
Let $f$ be a normalized univalent mapping in $\B^n$.
We say that {\sl $f$ can be evolved in finite time $N>0$ to a ball}
if there exists a family $(f_t: \B^n \to \C^n)$ for $t\in [0,N]$
such that $f_t$ is univalent for all $t\in [0,N]$, $f_s(\B^n)\subset f_t(\B^n)$
for all $0\leq s\leq t\leq N$, $f_t(0)=0, d(f_t)_0=e^t {\sf id}$,
$f=f_0$ and $f_N(\B^n)=e^N\cdot\B^n$.
We denote by $\mathcal E_N$ the set of normalized univalent mappings in $\B^n$
that can be evolved in finite time $N>0$ to a ball.
\end{definition}

\begin{remark}\label{ev-to-ger}
Let $f$ be a normalized univalent mapping in $\B^n$.
Then it is easily seen that $f$ can be evolved in finite time $N>0$ to a ball if and only
if there exists a normal Loewner chain $(f_t)_{t\geq 0}$ which is ger\"{a}umig for $t>N$
such that $f=f_0$ and $f_N=e^N{\sf id}$.
\end{remark}
\begin{proof}
Indeed, if $f$ can be evolved in finite time $N>0$ to a ball by means of the
family $(f_t)_{t\in [0,N]}$, then setting $f_t(z)=e^t z$ for $t\geq N$
and $z\in \B^n$, it is easy to see that the family $(f_t)_{t\geq 0}$ is a normal Loewner
chain which is ger\"{a}umig for $t>N$. The converse statement is obvious.
\end{proof}

Moreover, let $M>1$ and denote by
\[S^0(M):=\{f\in S^0: \sup_{z\in\B^n}\|f(z)\|\leq M\}.\]

Then $\mathcal E_{\log M}\subset S^0(M)$, and also
\begin{equation*}
\mathcal E_M \subset \mathcal E_{M'}, \quad \forall M<M'.
\end{equation*}

\begin{remark}
(i) If $n=1$, then it is clear that $S^0(M)=S(M)$, where
$$S(M)=\{f\in S: |f(z)|<M, z\in \D\}.$$

(ii) In the case $n=1$, the family $\mathcal E_{\log M}$ coincides
with the family $S(M)$ (see \cite[Exercise 2, Chapter 6]{Po};
see also \cite{Go} and \cite{Pr}).
\end{remark}

The geometric notion of mappings that can be evolved in finite time to a ball has a
counterpart in control theory.
To see this, we first recall the following notion
(see {\sl e.g.} \cite{Pr}, \cite{Pr1}, \cite{Ro}, \cite{Ro1}, \cite{GHKK1}):
\begin{definition}
Let $f$ be a normalized univalent mapping in $\B^n$. We say that $f$ is
{\sl  time-$\log M$-reachable}
for some $M>1$ if there exists a Herglotz vector field $G(z,t)$
associated with the class $\mathcal M$ in $\B^n$ such that
$f=M \v(\cdot,\log M)$ where  $(\v(z,t))_{t\geq 0}$
is the solution to the Loewner ODE \eqref{LLODE} such that $\v(z,0)=z$.
The set of  time-$\log M$-reachable mappings generated by ${\mathcal M}$
is denoted by $\tilde{\mathcal R}_{\log M}({\rm id}_{\B^n},{\mathcal M})$.
\end{definition}

By \cite[Theorem 3.7]{GHKK1}, $f\in \tilde{\mathcal R}_{\log M}({\rm id}_{\B^n},{\mathcal M})$
for some $M>1$ if and only if $f$ can be evolved in time $\log M$ to a ball, {\sl i.e.},
\begin{equation}
\label{equal1}
\tilde{\mathcal R}_{\log M}({\rm id}_{\B^n},{\mathcal M})=\mathcal E_{\log M}.
\end{equation}
Thus, by \cite[Corollary 3.8]{GHKK1}, for every $N>0$, the set $\mathcal E_N$ is compact.

\begin{remark}
According to \cite[Corollary 7]{GHK2},
\begin{equation}
\label{imer}
\tilde{\mathcal R}_{\log M}({\rm id}_{\B^n},{\mathcal M})\subset S^0(M)\setminus ({\sf Supp}(S^0)\cup {\sf Ex}(S^0)).
\end{equation}
\end{remark}
The geometrical counterpart of \eqref{imer} follows at once either from \eqref{equal1}
or directly from Remark \ref{ev-to-ger} and Proposition \ref{p.support-extreme}:

\begin{corollary}
\label{cor-bounded}
Let $f$ be a normalized univalent mapping in $\B^n$ which can be evolved in
finite time $\log M$ to a ball, for some $M>1$.
Then $f\in S^0(M)\setminus( {\sf Supp}(S^0)\cup {\sf Ex}(S^0))$.
\end{corollary}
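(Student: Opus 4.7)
The plan is to convert the geometric hypothesis (evolvability to a ball in finite time $\log M$) into the existence of a normal Loewner chain to which Proposition \ref{p.support-extreme} directly applies. First I would invoke Remark \ref{ev-to-ger} to extend the given family $(f_t)_{t\in[0,\log M]}$ (with $f_0=f$ and $f_{\log M}(\B^n)=e^{\log M}\B^n=M\B^n$) by setting $f_t(z):=e^t z$ for $t\ge \log M$, thereby obtaining a normal Loewner chain $(f_t)_{t\ge 0}$ which is ger\"aumig on $(\log M,+\infty)$ and such that $f_0=f$. Since $f$ is the initial element of a normal Loewner chain, Remark (i) after the definition of parametric representation yields $f\in S^0$.

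Next I would establish the bound. Because the chain satisfies $f_s(\B^n)\subset f_t(\B^n)$ for $0\le s\le t$, taking $s=0$ and $t=\log M$ gives $f(\B^n)=f_0(\B^n)\subset f_{\log M}(\B^n)=M\B^n$, so $\sup_{z\in\B^n}\|f(z)\|\le M$ and $f\in S^0(M)$.

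It remains to show that $f\notin{\sf Supp}(S^0)\cup{\sf Ex}(S^0)$. Since $(f_t)_{t\ge 0}$ is ger\"aumig on $(\log M,+\infty)$, in particular it is ger\"aumig on some $[T_1,T_2)$ with $\log M<T_1<T_2<\infty$, and condition (3) of Definition \ref{spacious} says that it is exponentially squeezing on $[T_1,T_2)$. Applying Proposition \ref{p.support-extreme} to the normal Loewner chain $(f_t)_{t\ge 0}$ then immediately yields that $f_0=f$ is neither a support point nor an extreme point of $S^0$, completing the proof.

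The argument involves no real obstacle beyond correctly quoting the earlier machinery; the only thing to check is that the ``ger\"aumig for $t>\log M$'' conclusion of Remark \ref{ev-to-ger} gives an interval on which Proposition \ref{p.support-extreme} can be applied, which is immediate from part (3) of Definition \ref{spacious}.
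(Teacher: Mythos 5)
Your proof is correct and matches the paper's intended argument: the paper states that the corollary follows ``directly from Remark \ref{ev-to-ger} and Proposition \ref{p.support-extreme},'' and you have simply filled in the routine details of that chain of implications (extending the family to a normal Loewner chain ger\"aumig past $\log M$, noting the inclusion $f(\B^n)\subset M\B^n$ for the bound, and invoking exponential squeezing to apply Proposition \ref{p.support-extreme}).
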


\begin{remark}
\label{r.reachable-nonlinear}
Let $f\in \mathcal{E}_{\log M}$, where $M>1$, and
let $\Phi:{\sf Hol}(\B^n, \C^n)\to \C$ be a Fr\'echet differentiable
functional such that the Fr\'echet differential $L(f;\cdot)$ of $\Phi$
at $f$ is  not constant on $S^0$. Taking into account Proposition
\ref{nonlinear} and Remark \ref{ev-to-ger}, we deduce that
$f$ is not a maximum of $\Re\Phi$ in $S^0$.
\end{remark}

As we already remarked, in dimension one
$S^0(M)=\mathcal E_{\log M}=\tilde{\mathcal R}_{\log M}({\rm id}_{\D},{\mathcal M})$ for all $M>1$.
In higher dimensions this is no longer the case. In order to properly state our result,
we need a preliminary result (see \cite[Example 3.5 and Theorem 3.12]{GHKK1}).

\begin{lemma}
\label{l-starlike}
Assume that $F\in S^0$ is starlike. For each $N>1$, define
\begin{equation}
\label{starlike}
F^N(z)=NF^{-1}\left(\frac{F(z)}{N}\right), \quad z\in \B^n.
\end{equation}
Then $F^N\in \mathcal{E}_{\log N}$. Moreover, if $F$
maximizes on $S^0$ a continuous functional $\lambda:S^0\to \R$ then  $F^N$
maximizes on $\mathcal{E}_{\log N}$  the  functional
$\lambda^N:\mathcal{E}_{\log N}\to\mathbb{R}$, defined by
$$\lambda^N(g)=\lambda(NF(N^{-1}g(\cdot))),\quad
g\in\mathcal{E}_{\log N}.$$ In
addition, $\lambda^N(F^N)=\lambda(F)$.
\end{lemma}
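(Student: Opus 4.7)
The plan decomposes into three parts matching the three assertions. First, I will exhibit an explicit family realizing $F^N\in\mathcal{E}_{\log N}$. Since $F$ is starlike, $\Omega:=F(\B^n)$ satisfies $r\Omega\subset\Omega$ for every $r\in[0,1]$. I set
\[
g_t(z):=NF^{-1}\bigl(e^{t-\log N}F(z)\bigr),\qquad z\in\B^n,\;t\in[0,\log N],
\]
which is well defined because $e^{t-\log N}\le 1$ gives $e^{t-\log N}\Omega\subset\Omega$. Direct substitution yields $g_0=F^N$ and $g_{\log N}(z)=Nz$; expansion at the origin gives $g_t(0)=0$ and $d(g_t)_0=e^t{\sf id}$; univalence is automatic as a composition of univalent maps; and the nesting $g_s(\B^n)\subset g_t(\B^n)$ for $s\le t$ reduces to $e^{s-t}\Omega\subset\Omega$, another instance of starlikeness.

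Second, I will show that for every $g\in\mathcal{E}_{\log N}$ the mapping $h:=NF(N^{-1}g(\cdot))$ lies in $S^0$; combined with maximality of $F$ on $S^0$, this immediately gives $\lambda^N(g)=\lambda(h)\le\lambda(F)$. By Remark \ref{ev-to-ger} I embed $g$ into a normal Loewner chain $(g_t)_{t\ge 0}$ with $g_0=g$ and $g_{\log N}(z)=Nz$, and define
\[
H_t(z):=\begin{cases}NF\bigl(N^{-1}g_t(z)\bigr),&t\in[0,\log N],\\ e^tF(z),&t\ge\log N,\end{cases}
\]
which matches at $t=\log N$. Well-definedness of the first piece uses $g_t(\B^n)\subset g_{\log N}(\B^n)=N\B^n$; univalence is immediate; the subordination identities across the two regimes reduce to the inclusions $(e^{-t}N)\Omega\subset\Omega$ and $e^{s-t}\Omega\subset\Omega$, both supplied by starlikeness. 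To show that $\{e^{-t}H_t\}_{t\ge 0}$ is normal, on $t\ge\log N$ this function is the constant mapping $F$, while on the compact range $t\in[0,\log N]$ the joint continuity of $(t,z)\mapsto g_t(z)$ on $[0,\log N]\times K$ for any compact $K\subset\B^n$, together with the inclusion $g_t(K)\subset N\B^n$, places $N^{-1}g_t(K)$ in a common compact subset of $\B^n$ on which $F$ is bounded, yielding the required uniform estimate. Hence $H_0=h\in S^0$.

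Third, I verify $\lambda^N(F^N)=\lambda(F)$ by the direct computation
\[
NF\bigl(N^{-1}F^N(z)\bigr)=NF\bigl(F^{-1}(F(z)/N)\bigr)=F(z),
\]
so the maximality inequality becomes an equality at $g=F^N$, proving that $F^N$ maximizes $\lambda^N$ on $\mathcal{E}_{\log N}$.

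The main obstacle I anticipate is the normality of $(H_t)$: one must guarantee that $N^{-1}g_t(K)$ remains in a fixed compact subset of $\B^n$ uniformly for $t\in[0,\log N]$, which forces genuine use of the regularity of the normal Loewner chain $(g_t)$ supplied by Remark \ref{ev-to-ger}, not merely of an abstract interpolating family. Once this is secured, the verifications of subordination, univalence, and normalization reduce to routine computations leveraging the starlikeness of $\Omega$.
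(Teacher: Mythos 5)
Your proof is correct, and since the paper cites \cite[Example 3.5 and Theorem 3.12]{GHKK1} for this lemma rather than proving it in-text, I can only evaluate your argument on its own merits. The decomposition into three parts is the right one, and each part holds up. In the first part, the explicit family $g_t(z)=NF^{-1}(e^{t-\log N}F(z))$ is exactly the natural interpolation; all properties (normalization, univalence, nesting, terminal ball) follow from starlikeness as you indicate, and $d(g_t)_0=Ne^{t-\log N}\,{\sf id}=e^t\,{\sf id}$ checks out. In the second part, the construction of $(H_t)$ by piecing $NF(N^{-1}g_t(\cdot))$ for $t\le\log N$ with $e^tF$ for $t\ge\log N$ is sound: the two regimes agree at $t=\log N$ because $g_{\log N}=N\,{\sf id}$, the subordination inclusions reduce to the two starlikeness relations you cite, and the map $g\mapsto NF(N^{-1}g(\cdot))$ is injective, which makes the maximization of $\lambda^N$ on $\mathcal{E}_{\log N}$ a genuine consequence of the maximization of $\lambda$ on $S^0$. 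The one delicate step is the normality of $\{e^{-t}H_t\}$, and you handle it correctly: since the Loewner chain $(g_t)$ from Remark \ref{ev-to-ger} satisfies the Loewner--Kufarev PDE (Theorem \ref{all}), $t\mapsto g_t$ is continuous in ${\sf Hol}(\B^n,\C^n)$, so $(t,z)\mapsto g_t(z)$ is jointly continuous on $[0,\log N]\times K$; its image is a compact subset of the open set $N\B^n$, hence contained in $rN\overline{\B^n}$ for some $r<1$, which yields a uniform bound after composing with $F$. Finally, the identity $NF(N^{-1}F^N(z))=F(z)$ is immediate and closes the argument. I would only suggest stating explicitly that you invoke the characterization that $h\in S^0$ iff $h$ is the initial element of some normal Loewner chain, since that is what turns the constructed $(H_t)$ into the conclusion $h\in S^0$; as written you leave it implicit.
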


Let
$\Phi(z_1,z_2):=\Big(z_1+\frac{3\sqrt{3}}{2}z_2^2,z_2\Big)$, $z=(z_1,z_2)\in\B^2$.
Let $M:=2\sup_{z\in \B^2}\|\Phi(z)\|<+\infty$.
As we already remarked in Example \ref{ese-no},
$\Phi\in S^0(M)$ is a support point for $S^0$
and, in fact, it maximizes the functional
$\Re L^1_{0,2}: S^0\to \R$ given by $L_{0,2}^1(f)=
\frac{1}{2}\frac{\partial^2 f_1}{\partial z_2^2}(0)$
for $f=(f_1,f_2):\B^2\to\C^2$  holomorphic mapping (see  \cite{B}).
Such a mapping is starlike (see \cite{Su77}, \cite{B}). Let $N>1$. Then
\begin{equation}
\label{starlike3}
\Phi^N(z)=\left(z_1+\frac{3\sqrt{3}}{2}\left(1-\frac{1}{N}\right)z_2^2,z_2\right),\quad
z=(z_1,z_2)\in\B^2.
\end{equation}

Now we are ready to state and prove our result:
\begin{theorem}\label{PhiN}
For any $N>1$ the mapping $\Phi^N$ is a support point in $\mathcal E_{\log N}$
which maximizes the linear functional
$\Re L^1_{0,2}$. In particular, for all $f=(f_1,f_2)\in \mathcal E_{\log N}$ it follows
\begin{equation}
\label{coefficient-starlike}
\left|\frac{\partial^2 f_1}{\partial z_2^2}(0)\right|\leq 3\sqrt{3}\left(1-\frac{1}{N}\right),
\end{equation}
and this estimate is sharp. Moreover, for any $2<R<N$,
\[
\Phi^N\in S^0(M)\cup \mathcal E_{\log N}\setminus (\mathcal E_{\log R}\cup {\sf Supp}(S^0)\cup {\sf Ex}(S^0)).
\]
\end{theorem}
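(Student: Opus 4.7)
The plan is to deduce everything from Lemma \ref{l-starlike} applied to the known support point $\Phi$, combined with the rotation invariance of $\mathcal{E}_{\log N}$ and Corollary \ref{cor-bounded}. The heart of the argument is a short power-series computation showing that the functional $\lambda^N$ produced by Lemma \ref{l-starlike} reduces to $\Re L^1_{0,2}$ up to an additive constant.

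First I record the two basic memberships. That $\Phi^N \in \mathcal{E}_{\log N}$ is asserted by Lemma \ref{l-starlike} applied to the starlike mapping $F = \Phi$, formula \eqref{starlike} producing exactly \eqref{starlike3}. For $\Phi^N \in S^0(M)$ I use the convex combination identity
\[
\Phi^N = (1 - 1/N)\Phi + (1/N)\,\mathrm{id},
\]
which, together with $M = 2\sup_{z \in \B^2}\|\Phi(z)\| > 2$, gives $\|\Phi^N(z)\| \le (1 - 1/N)(M/2) + 1/N \le M/2 < M$ for all $z \in \B^2$.

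For the maximization claim I apply Lemma \ref{l-starlike} with $\lambda = \Re L^1_{0,2}$, which $\Phi$ maximizes on $S^0$ by \cite{B}. The lemma then forces $\Phi^N$ to maximize the functional $\lambda^N(g) = \Re L^1_{0,2}\bigl(N\Phi(N^{-1}g(\cdot))\bigr)$ over $\mathcal{E}_{\log N}$. Since $g(z) = z + O(\|z\|^2)$, in particular $g_2(z) = z_2 + O(\|z\|^2)$, one computes
\[
N\Phi(N^{-1}g(z)) = \bigl(g_1(z) + \tfrac{3\sqrt{3}}{2N}g_2(z)^2,\; g_2(z)\bigr),
\]
whose first component has second-order part $(P_2)_1(z) + \tfrac{3\sqrt{3}}{2N}z_2^2$, so that $\lambda^N(g) = \Re L^1_{0,2}(g) + \tfrac{3\sqrt{3}}{2N}$. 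As $\lambda^N$ and $\Re L^1_{0,2}$ differ by a constant, they share the same maximizers on $\mathcal{E}_{\log N}$; hence $\Phi^N$ maximizes $\Re L^1_{0,2}$ there. This functional is non-constant on $\mathcal{E}_{\log N}$ (it vanishes at $\mathrm{id}$ but equals $\tfrac{3\sqrt{3}}{2}(1 - 1/N) > 0$ at $\Phi^N$), so $\Phi^N$ is a support point of $\mathcal{E}_{\log N}$.

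The sharp estimate \eqref{coefficient-starlike} follows by a rotation trick: $\mathcal{E}_{\log N}$ is invariant under unitary conjugation $f \mapsto U^*f(U\cdot)$ (since the class $\mathcal M$ is), and choosing $U = \mathrm{diag}(e^{i\alpha}, e^{i\beta})$ multiplies $L^1_{0,2}(f)$ by $e^{i(2\beta - \alpha)}$. Rotating appropriately, one may assume $L^1_{0,2}(f) \ge 0$, whence $|L^1_{0,2}(f)| \le L^1_{0,2}(\Phi^N) = \tfrac{3\sqrt{3}}{2}(1 - 1/N)$, yielding \eqref{coefficient-starlike} with equality realized by $\Phi^N$ itself. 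For $2 < R < N$, if $\Phi^N$ belonged to $\mathcal{E}_{\log R}$ then the same bound with $R$ in place of $N$ would force $|\partial^2_{z_2}\Phi^N_1(0)| \le 3\sqrt{3}(1 - 1/R) < 3\sqrt{3}(1 - 1/N)$, contradicting the actual value. Finally, Corollary \ref{cor-bounded} applied to $\Phi^N \in \mathcal{E}_{\log N}$ gives $\Phi^N \notin {\sf Supp}(S^0) \cup {\sf Ex}(S^0)$. The main obstacle is the calculation of $\lambda^N$: although the transformation $g \mapsto N\Phi(N^{-1}g)$ looks nontrivial, only the leading $z_2^2$ in $g_2(z)^2$ contributes to the second-order $z_2$-derivative, so $\lambda^N$ collapses to an additive shift of $\Re L^1_{0,2}$; the rest is bookkeeping.
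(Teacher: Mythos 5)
Your proof is correct and follows the paper's structure closely: the core step in both is the observation that $\lambda^N = \Re L^1_{0,2} + \tfrac{3\sqrt{3}}{2N}$ via Lemma \ref{l-starlike}, which immediately transfers the maximization from $\Phi$ on $S^0$ to $\Phi^N$ on $\mathcal{E}_{\log N}$, and the exclusion $\Phi^N \notin \mathcal{E}_{\log R}$ is the same contradiction argument. You differ from the paper in two small, equivalent places. For the bound $\Phi^N \in S^0(M)$, you use the convex-combination identity $\Phi^N = (1-1/N)\Phi + (1/N)\,\mathrm{id}$ and the triangle inequality, whereas the paper uses the scaling identity $\Phi^N(z) = \tfrac{1}{r}\Phi(rz)$ with $r = 1 - 1/N$ and needs $r > 1/2$; your bound is marginally cleaner and holds for all $N > 1$, though this gains nothing since the last clause anyway requires $R > 2$. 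For $\Phi^N \notin \mathsf{Supp}(S^0) \cup \mathsf{Ex}(S^0)$, you invoke Corollary \ref{cor-bounded} directly from $\Phi^N \in \mathcal{E}_{\log N}$, while the paper routes through the same scaling identity and Proposition \ref{middle}; both are one-line applications of results already in hand, so neither is materially better. One genuine improvement in your write-up is that you make the unitary-rotation step explicit when passing from $\Re\,\partial^2_{z_2} f_1(0) \le 3\sqrt{3}(1-1/N)$ to the modulus bound \eqref{coefficient-starlike}; the paper says this ``follows at once'' and leaves the rotation $f \mapsto U^* f(U\cdot)$ with $U = \mathrm{diag}(e^{i\alpha}, e^{i\beta})$ implicit, so spelling it out is a useful clarification.
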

\begin{proof}
In view of Lemma \ref{l-starlike}, $\Phi^N\in \mathcal{E}_{\log N}$
maximizes the functional $(\Re L_{0,2}^1)^N: \mathcal{E}_{\log N}\to\mathbb{R}$.
A direct computation shows that, given $f=(f_1,f_2)\in \mathcal E_{\log N}$,
\[
(\Re L_{0,2}^1)^N (f)=\Re\frac{1}{2}\frac{\partial^2 f_1}{\partial z_2^2}(0)+
\frac{3\sqrt{3}}{2N}=\Re L_{0,2}^1(f)+\frac{3\sqrt{3}}{2N}.
\]
Hence, $\Phi^N$ maximizes $\Re L^1_{0,2}$ on  $\mathcal{E}_{\log N}$ and
\eqref{coefficient-starlike} follows at once.

In order to prove the last statement, we notice that,
given $1<R<N$, setting $r=(1-1/N)$, it follows that
$\Phi^N(z)=\frac{1}{r} \Phi(rz)$ for all $z\in \B^2$.
Hence, if $r\in (1/2,1)$---which amounts to $R>2$---we have
\[
\sup_{z\in \B^2}\|\Phi^N(z)\|\leq 2 \sup_{z\in \B^2}\|\Phi(z)\|=M.
\]
Therefore, $\Phi^N\in S^0(M)\cup \mathcal E_{\log N}$. Also, by Proposition
\ref{middle}, $\Phi^N\not\in  ({\sf Supp}(S^0)\cup {\sf Ex}(S^0))$.
Finally, $\Phi^N\not\in \mathcal E_{\log R}$ for all $R<N$ because otherwise
it would contradict \eqref{coefficient-starlike}.
\end{proof}

Note that the mapping $\Phi$ is a
bounded support point in $S^0$ which, by Corollary \ref{cor-bounded}, cannot be evolved in
finite time to a ball, and hence it is not time-$\log M$-reachable.
Therefore, such a mapping gives also a counterexample to Conjecture 3.9 in \cite{GHKK1}.

Moreover, and more interesting, Theorem \ref{PhiN} shows that, in contrast to the
one-dimensional case, in general
\[
\tilde{\mathcal R}_{\log M}({\rm id}_{\B^n},{\mathcal M})\not =
S^0(M)\setminus ({\sf Supp}(S^0)\cup {\sf Ex}(S^0)).
\]

It is also interesting to note that for every $R>N$,
\[
\Phi^N\in \mathcal{E}_{\log R}\setminus ({\sf Supp}(\mathcal{E}_{\log R})
\cup{\sf Ex}(\mathcal{E}_{\log R})).
\]
Indeed, in view of Theorem \ref{PhiN},
$\Phi^N\in \mathcal{E}_{\log N}\subset \mathcal{E}_{\log R}$.
Moreover, given any bounded linear functional $L$ on ${\sf Hol}(\B^n, \C^n)$,
it holds by linearity that $L(\Phi^N)= L({\sf id})+c_L\cdot L^1_{0,2}(\Phi^N)$
for some $c_L$. Hence, again by Theorem \ref{PhiN}, $\Phi^N\not\in {\sf Supp}(\mathcal{E}_{\log R})$.
Moreover, let $g(z_1,z_2)=(\delta z_2^2, 0)$ for some $\delta>0$. Note that $\Phi^N\pm g=\Phi^{N_\pm}$
for some $N_\pm >1$. If $\delta<<1$ then $\Phi^N+g, \Phi^N-g\in \mathcal{E}_{\log R}$.
Since $\Phi^N=\frac{1}{2}[(\Phi^N-g)+(\Phi^N+g)]$,
it follows that $\Phi^N\not\in {\sf Ex}(\mathcal{E}_{\log R})$

\begin{remark}
It is not known whether $\Phi$, (respectively $\Phi^N$), is an extreme point for $S^0$
(respectively for $\mathcal E_{\log N}$). However, since the
hyperplane $\{g\in {\sf Hol}(\B^2,\C^2): \Re L^1_{0,2}(g)=3\sqrt{3}/2\}$
intersects $S^0$ and it is a separating hyperplane (see {\sl e.g.} \cite[Theorem 4.6]{HM}),
there exists $f\in {\sf Ex}(S^0)$ such that
$\Re L^1_{0,2}(f)=3\sqrt{3}/2$. Similarly, for every $M>1$
there exists $f\in {\sf Ex}({\mathcal E}_{\log M})$ such that
$\Re L^1_{0,2}(f)=
\frac{3\sqrt{3}}{2}(1-1/M)$.
\end{remark}

The previous considerations make natural to ask the following questions:

\begin{question}
\label{q.embed}
Let $M>1$ and let $f\in S^0(M)\setminus ({\sf Supp}(S^0)\cup {\sf Ex}(S^0))$.
\begin{enumerate}
\item Is it true that there exists $R\geq M$ such that $f\in \mathcal E_{\log R}$?
\item  Is it true that $f$ can be embedded into an exponentially squeezing Loewner chain?
\end{enumerate}
\end{question}

Clearly, an affirmative answer to Question \ref{q.embed} (1) would imply an affirmative answer to
Question \ref{q.embed} (2).

\section*{Acknowledgments}
\label{acknowledgments}
The authors would like to thank the referees for valuable comments and suggestions
that improved the manuscript.

\bibliographystyle{amsplain}

\end{document}